\begin{document}

\title[A result of convergence for a D1Q2 lattice Boltzmann scheme]{A result of convergence for a mono-dimensional two-velocities lattice Boltzmann scheme}

\author[F.~Caetano]{Filipa~Caetano$^1$}
\author[F.~Dubois]{Fran\c cois~Dubois$^{1,2}$}
\author[B.~Graille]{Benjamin~Graille$^{1, \natural}$}

\address{$^\natural$ corresponding author: \textup{benjamin.graille@universite-paris-saclay.fr}}
\address{$^1$ Universit\'e Paris-Saclay, CNRS, Laboratoire de math\'ematiques d'Orsay, 91405, Orsay, France.}
\address{$^2$ Conservatoire National des Arts et M\'etiers, LMSSC laboratory, Paris, France}

\date{\today}
\keywords{Lattice Boltzmann scheme, relaxation method, hyperbolic system}
\subjclass[2010]{35L60, 65M08, 65M12, 76P99}


\begin{abstract}
  We consider a mono-dimensional two-velocities scheme used to approximate the solutions of a scalar hyperbolic conservative partial differential equation. We prove the convergence of the discrete solution towards the unique entropy solution by first estimating the supremum norm and the total variation of the discrete solution, and second by constructing a discrete kinetic entropy-entropy flux pair being given a continuous entropy-entropy flux pair of the hyperbolic system. We finally illustrate our results with numerical simulations of the advection equation and the Burgers equation.
\end{abstract}

\maketitle


\section{Introduction}

The lattice Boltzmann method is a numerical method which is largely used to simulate fluid dynamics equations, such as Navier Stokes, heat, acoustics equations, multi-phase and multi-component fluids (see Succi \cite{Suc:2015:0}, Lallemand and Luo \cite{LalLuo:2000:0}, and \textsl{e.g.} \cite{kru:2017:0}). Its origin is in a discretized velocities version of the continuous Boltzmann equation (see Broadwell \cite{Bro:1964:0} and Gatignol \cite{Gat:1975:0}), with a specific collision kernel. The algorithm of the lattice Boltzmann method reads then as a fully discretized Boltzmann equation on a lattice.  

The lattice Boltzmann method supposes that particles progress on a discrete Cartesian lattice with a finite set of speeds. In one time step, each velocity allows particles to jump from one vertex of the lattice to another one. One iteration of the method can be described in two steps: a relaxation step, which is local to each vertex and which corresponds to the collision of the particles, followed by a transport step, which corresponds to the evolution of the particles on the lattice.

Despite the fact that this method is widely used, the mathematical numerical analysis is far from being complete. In \cite{Rhe:2008:0}, a stability analysis is proposed by Rheinl\"ander for a two-velocities lattice Boltzmann scheme for the linear advection equation. In \cite{JunYan:2009:0}, Junk and Yang studied the convergence of approximation of smooth solutions for the incompressible Navier-Stokes equations. Concerning the linear one-dimensional convection-diffusion equation, Dellacherie proves the convergence in \(\Linf\) norm of the solutions of a two-velocities lattice Boltzmann scheme \cite{Dell:2014:0}. We remark that in previous works studying lattice Boltzmann methods, links between lattice Boltzmann and finite difference methods were done (see Junk \cite{Jun:2001:0} and \cite{Gra201400}). Concerning non-linear models, in \cite{BogLovYep:2004:0}, Boghosian, Love, and Yepez developed a two-velocities entropic scheme for the viscous Burgers equation and studied its properties.

In this contribution we consider a two-velocities lattice Boltzmann scheme in one dimension, known in the domain as \duqd, in order to approach a scalar conservation law. The purpose of our paper is to prove the convergence of this lattice Boltzmann scheme towards the unique entropy solution of the hyperbolic equation as the mesh size tends to zero, when the relaxation parameter of the scheme lies between 0 and~1. 

For this purpose, we make a link with the relaxation system of Jin and Xin \cite{JinXin:1995:0} and we use, in the context of our scheme, some techniques used to study the convergence of relaxation schemes based on the Jin and Xin approximation. Nevertheless, we point out that the lattice Boltzmann scheme is not constructed as a relaxation scheme, and it does not correspond to a natural discretization of the Jin and Xin system.

The convergence of the solutions of the Jin and Xin system towards the equilibrium, described by the scalar conservation law, was obtained independently by Natalini \cite{Nat:1996:0} and by Serre \cite{Ser:2000:0}. Concerning the numerical approximation of the scalar conservation law, Aregba-Driollet and Natalini in \cite{AreNat:1996:0} and \cite{Nat:1998:1}, and Lattanzio and Serre in \cite{LatSer:2001:0}, proved the convergence of finite volumes relaxation schemes based on the relaxation approximation of Jin and Xin. The relaxation schemes investigated in these works depend on two parameters (the relaxation rate $\jinxinparam$ and the mesh step $\dt$) and the convergence of the schemes is proven by passing to the limit both parameters. In \cite{AreNat:1996:0,Nat:1998:1}, uniform \(\Linf\) and total variation estimates are obtained to pass to the limit separately in both parameters. In \cite{LatSer:2001:0}, compensated compactness techniques are used to study the limit of a relaxation scheme as both parameters go simultaneously to 0.

The lattice Boltzmann scheme we investigate only involves one parameter $\dt$. Although this scheme is not a relaxation scheme, it can be linked to this kind of schemes, with a particular choice of the relaxation rate $\jinxinparam$ proportional to $\dt$. As we can see in the figure~\ref{fig:lbmvsrelax}, the convergence of the LB scheme corresponds to a diagonal limit (a limit on $\jinxinparam$ and on $\Delta t$ at the same time).

\begin{figure}
  \centering
  \begin{tikzpicture}[scale=1.5]
      \coordinate (O) at (0,0) ;
      \coordinate (A) at (2,0) ;
      \coordinate (B) at (0,2) ;
      \coordinate (Ca) at (1.8,1.8) ;
      \coordinate (Cb) at (1.3,1.8) ;
      \coordinate (D) at (0,1.8) ;
      \coordinate (E) at (1.3,0) ;
      \draw[thin, dashed, ->] (O) -- (A) node[right] {$\jinxinparam$};
      \draw[thin, dashed, ->] (O) -- (B) node[above] {$\Delta t$};
      \draw[thick, orange] (O) -- (Ca) node[above, right] {\footnotesize lattice Boltzmann methods: $\jinxinparam$ and $\Delta t$ proportional};
      \draw[thick, blue] (O) -- (D) node[midway, left] {\footnotesize finite volume: $\jinxinparam=0$};
      \draw[thick, violet] (Cb) -- (E) node[midway, right] {\footnotesize relaxation scheme: $\jinxinparam$ fixed};
      \node[below] (O) {$0$};
      \node[left] (O) {$0$};
  \end{tikzpicture}
  \caption{Representation of the schemes parameters in the plane $(\jinxinparam, \dt$).}
  \label{fig:lbmvsrelax}
\end{figure}

The aim of our work is to prove, by using finite volumes methods techniques  such as total variation and \(\Linf\) bounds inspired by \cite{AreNat:1996:0}, \cite{Nat:1998:1}, and \cite{LatSer:2001:0}, that the solutions of the \duqd scheme converge towards the numerical solution of a scalar hyperbolic conservation law, under a stability condition. To the knowledge of the authors, this result is the first one of convergence for the solution of a lattice Boltzmann scheme towards the solution of a nonlinear conservation equation. 

The plan of the paper is as follows. In section~\ref{sec:desc_scheme} we describe the \duqd scheme and recall its construction, by following the framework of d'Humi\`eres \cite{dHu:1994:0}. In section~\ref{sec:TVLinf_est} we establish \(\Linf\) and total variation discrete estimates. By using these estimates, in section~\ref{sec:conv_scheme} we prove the convergence of the \duqd scheme towards a weak solution of the scalar conservation law, in the case where the relaxation parameter of the scheme lies between 0 and 1. In section~\ref{sec:num_ent}, by following Serre \cite{Ser:2000:0}, Bouchut \cite{Bou:2003:0}, and \cite{Dub:2013:0}, we introduce numerical entropies for the \duqd scheme. Based on a \(\Lun\) estimation of the equilibrium gap independent of the mesh step established in section~\ref{sec:TVLinf_est}, we prove in section~\ref{sec:conv_entropysol} the convergence of the \duqd scheme towards the unique entropy solution of the scalar conservation law. Finally, section~\ref{sec:num_ill} presents several numerical tests that illustrate the convergence of the scheme. We also present some tests that precise the numerical convergence rate of the scheme, however without theoretical results.

\section{Description of the \duqd scheme}
\label{sec:desc_scheme}

We consider the following mono-dimen\-sional scalar conservation law 
\begin{equation}\label{eq:s1_edp}
 \drondt \incu(\vart,\varx) + \drondx \phi(\incu)(\vart,\varx) = 0, 
 \qquad \vart>0, \ \varx\in\R,
\end{equation}
where the flux \(\phi\) is a \(\Cun\) function on \(\R\), with the initial condition
\begin{equation}\label{eq:s1_ic}
 \incu(0,\varx) = \incu^0(\varx), \qquad \varx\in\R.
\end{equation}
It is well known that the Cauchy problem 
(\ref{eq:s1_edp}-\ref{eq:s1_ic}) 
possesses a unique entropy solution which belongs to 
\(\Linf(]0,T[\times\R)\), for all \(T>0\), and such that \(\incu(\vart,\cdot)\in\BV(\R)\), \(\forall\vart>0\), provided that the initial data \(\incu^0\in\Linf(\R)\cap\BV(\R)\) (see \textsl{e.g.} \cite{Ser:1996:0}, \cite{GodRav:1991:0}).  

In this contribution, a two-velocities lattice Boltzmann scheme is used to approximate the solution of this Cauchy problem.

\subsection{The framework of d'Humi\`eres}
\label{sec:Hum}

In the contribution, we use the notations proposed by d'Humi\`eres in \cite{dHu:1994:0} by considering \(\lattice=\dx\Z\), a regular lattice in one dimension of space with typical mesh size \(\dx\). The time step \(\dt\) is determined after the specification of the velocity scale \(\lambda\) by the relation:  
\begin{equation}\label{eq:lambda}
\dt=\dfrac{\dx}{\lambda}.
\end{equation}
For the scheme denoted by \duqd, we introduce \(\vectv=\lbrace-\lambda,\lambda\rbrace\) the set of the two velocities. The aim of the \duqd scheme is to compute a particle distributions vector
\(\vectf = (\fmnum,\fpnum)\transpose\)
on the lattice \(\lattice\) at discrete values of time: it is a numerical scheme that is formally a discretization in time, space and velocity, where only a finite number of velocities is considered (two, in our contribution), of the Boltzmann equation
\begin{equation*}
\drondt \stylecontinue f(\vart, \varx, c) + c \drondx \stylecontinue f(\vart, \varx, c) = \mathcal{Q}(\stylecontinue f)
\end{equation*}
(even if it cannot be used to simulate this partial differential equation), with a specific collision operator \(\mathcal{Q}(\stylecontinue f)\), whose effect is to relax the particle distributions \(\stylecontinue f\) towards its equilibrium value \(\stylecontinue f^{\equilibre}\).

The scheme splits into two phases for each time iteration: first, the relaxation phase that is local in space (corresponding to the consideration of the collision operator \(\mathcal{Q}\)), and second, the transport phase for which an exact characteristic method is used.

In the framework proposed by d'Humi\`eres \cite{dHu:1994:0}, the relaxation phase reads as a linear relaxation towards the equilibrium that is diagonal into a peculiar base. The vectors of this base are called ``moments'', the terminology being taken from the kinetic theory. Moreover, the equilibrium is {\itshape a priori\/} a nonlinear function of the conservative variables.

In the following, we denote by \(\vectm = (\incun,\incvn)\transpose\) the moments defined for each space point \(\varx\in\lattice\) and for each time \(\vart\) by
\begin{equation}
 \incun = \fmnum + \fpnum, \qquad \incvn = \lambda \left( -\fmnum + \fpnum\right).
\end{equation}
The matrix of the moments \(\matM\) such that \(\vectm = \matM\vectf\) satisfies
\begin{equation}
 \matM = \begin{pmatrix}
 1&1\\ -\lambda&\lambda
 \end{pmatrix},\qquad
 \imatM = \begin{pmatrix}
 1/2 & -1/(2\lambda) \\ 1/2 & 1/(2\lambda)
 \end{pmatrix}.
\end{equation}

Let us now describe one time step of the scheme. The starting point is the particle distributions vector \(\vectf(\varx,\vart)\) in \(\varx\in\lattice\) at time \(\vart\), the moments being computed by
\begin{equation}\label{eq:s1_ftom}
 \vectm(\varx, \vart) = \matM \vectf(\varx,\vart).
\end{equation}
The relaxation phase then reads
\begin{equation}\label{eq:s1_relaxation}
 \incus(\varx,\vart) = \incun(\varx,\vart), 
 \qquad 
 \incvs(\varx,\vart) = \incvn(\varx,\vart) + s (\incve(\varx,\vart) - \incvn(\varx,\vart)),
\end{equation}
where \(s\) is the constant relaxation parameter and \(\incve\) the equilibrium of the second moment, which is considered to be a function of \(\incun\). As a consequence, the first moment \(\incun\) is conserved during the relaxation phase. The relaxation parameter \(s\) is usually taken in \((0,2]\), for stability reasons.

The particle distributions after the relaxation phase are then computed by
\begin{equation}\label{eq:s1_mtof}
 \vectfs (\varx,\vart) = \imatM \vectms(\varx,\vart).
\end{equation}
The transport phase finally reads
\begin{equation}\label{eq:s1_transport}
 \fj(\varx,\vart+\dt) = \fjs(\varx-\epsilon\dx,\vart), \qquad \epsilon\in\{\moins,\plus\}.
\end{equation}

In order to be consistent with Eq.~\eqref{eq:s1_edp}, we impose that \(\incve=\phi(\incun)\) \cite{Gra201400}.

\subsection{A finite volume formalism} 

In order to study the convergence of the scheme, we rewrite it into a finite volume formalism in this section. We first introduce usual finite volume notations. We note \((\xj)_{j\in\Z}\) the sequence of the discrete points in space that make up the lattice \(\lattice\) and \((\tn)_{n\in\N}\) the sequence of the discrete times, with
\begin{equation*}
 \xj = j \dx, \quad j\in\Z, \qquad \tn = n \dt, \quad n\in\N.
\end{equation*}
The scheme can be described either in terms of the particle distributions variables or in terms of the moments. It computes \((\fmjn, \fpjn)_{j\in\Z, n\in\N}\) and \((\ujn, \vjn)_{j\in\Z, n\in\N}\), which are respectively approximations of the local averages of the particle distributions and of the moments in each volume \([\xj,\xj[j+1]]{\times}[\tn,\tn[n+1]]\).

According to \eqref{eq:s1_ftom}, the particle distributions and the moments are linked by the relations
\begin{equation*}
\left\lbrace
\begin{aligned}
 \ujn &= \fmjn+\fpjn, \\
 \vjn &= -\lambda \fmjn + \lambda\fpjn,
\end{aligned}\right.
\qquad
\left\lbrace
\begin{aligned}
 \fmjn &= \tfrac{1}{2} \ujn - \tfrac{1}{2\lambda}\vjn, \\
 \fpjn &= \tfrac{1}{2} \ujn + \tfrac{1}{2\lambda}\vjn, 
 \end{aligned}\right.
\qquad j\in\Z, \ n\in\N.
\end{equation*}

We denote by \((\fmjnpd, \fpjnpd)\) (\textsl{resp.} \((\ujnpd, \vjnpd)\)) the approximated particle distributions functions (\textsl{resp.} moments) at time \(\tn\) after the relaxation step.
By using these notations, according to \eqref{eq:s1_relaxation} the relaxation step of the scheme reads
\begin{equation}\label{eq:sc_uv_relstep}
\ujnpd = \ujn, \qquad
\vjnpd = (1-s)\vjn+s\phi(\ujn),
\qquad j\in\Z, \ n\in\N.
\end{equation}
According to \eqref{eq:s1_mtof}, the particle distributions after the relaxation step are then defined by
\begin{equation}\label{eq:sc_f_relstep}  
 \fmjnpd = \tfrac{1}{2} \ujn - \tfrac{1}{2\lambda}\vjnpd, \qquad
 \fpjnpd = \tfrac{1}{2} \ujn + \tfrac{1}{2\lambda}\vjnpd, 
 \qquad j\in\Z, \ n\in\N,
\end{equation}  
and the transport phase \eqref{eq:s1_transport} now reads
\begin{equation}\label{eqq:sc_f_trstep}
\fpjnp = \fpjnpd[j-1], \qquad  \fmjnp= \fmjnpd[j+1],
\qquad j\in\Z, \ n\in\N.
\end{equation} 

We can then describe a complete step of the scheme in terms of the approximated distribution functions \((\fmjn, \fpjn)\) by
\begin{equation*}
\left\lbrace
\begin{aligned}
\fmjnp &= (1-\tfrac{s}{2}) \fmjn[j+1] + \tfrac{s}{2} \fpjn[j+1] - \tfrac{s}{2\lambda} \phi(\fmjn[j+1]+\fpjn[j+1]), \\
\fpjnp &=  \tfrac{s}{2} \fmjn[j-1] + (1-\tfrac{s}{2}) \fpjn[j-1] + \tfrac{s}{2\lambda} \phi(\fmjn[j-1]+\fpjn[j-1]), 
& j\in\Z,  \ n\in\N,
\end{aligned}
\right. 
\end{equation*}
and in terms of the approximated moments \((\ujn, \vjn)\) by
\begin{equation}\label{eq:sc_one_step_uv}
  \left\lbrace
  \begin{multlined}
    \ujnp = \tfrac{1}{2} (\ujn[j+1]+\ujn[j-1]) - \tfrac{1}{2\lambda}(\vjn[j+1]-\vjn[j-1])\qquad\qquad\qquad\qquad\\
    \shoveright{+\tfrac{s}{2\lambda}\bigl((
        \phi(\ujn[j-1])-\vjn[j-1])
       - (
        \phi(\ujn[j+1])-\vjn[j+1])
      \bigr),}\\
   \shoveleft{\vjnp = \tfrac{1}{2} (\vjn[j+1]+\vjn[j-1]) - \tfrac{\lambda}{2}(\ujn[j+1]-\ujn[j-1])}\\
   +\tfrac{s}{2}\bigl(
        (\phi(\ujn[j-1])-\vjn[j-1])
      + (\phi(\ujn[j+1])-\vjn[j+1])
      \bigr),
  \end{multlined} \right.
  \end{equation}
  for $j\in\Z,\ n\in\N$, the discrete initial data \((\ujz, \vjz)_{j\in\Z}\) being taken as an approximation of \((\incu^0,\phi(\incu^0))\) that will be specified below.

Since $\lambda=\tfrac{\dx}{\dt}$, equation \eqref{eq:sc_one_step_uv} can be rewritten as  
\begin{equation}\label{eq:sc_one_step_con}
\left\lbrace
\begin{multlined}
 \frac{\ujnp-\ujn[j]}{\dt} 
  + \frac{\vjn[j+1]-\vjn[j-1]}{2\dx} 
  - \lambda\frac{\ujn[j+1]-2\ujn[j]+\ujn[j-1]}{2\dx}
  \qquad\qquad\qquad\qquad\\
\shoveright{
=\frac{s}{2\lambda\dt}
  \Bigl[ 
    \bigl(
      \phi(\ujn[j-1])-\vjn[j-1]
    \bigr) - \bigl(
      \phi(\ujn[j+1])-\vjn[j+1]
    \bigr)
    \Bigr],}\\
\shoveleft{\frac{\vjnp-\vjn[j]}{\dt} 
  + \lambda^2\frac{\ujn[j+1]-\ujn[j-1]}{2\dx} 
  - \lambda\frac{\vjn[j+1]-2\vjn[j]+\vjn[j-1]}{2\dx}}\\
=\frac{s}{2\dt}
  \Bigl[ 
    \bigl(
      \phi(\ujn[j-1])-\vjn[j-1]
    \bigr) + \bigl(
      \phi(\ujn[j+1])-\vjn[j+1]
    \bigr)
    \Bigr].
  \end{multlined}
\right.
\end{equation}

Let us now make a link between the \duqd scheme and a relaxation approximation of the scalar conservation law. For given \(\Lambda\) and \(\jinxinparam>0\), let us consider the Jin and Xin relaxation system \cite{JinXin:1995:0}:
\begin{equation}
\label{eq:sys_JX_nh}
  \left\lbrace
  \begin{aligned}
&\drondt \incu +  \drondx \incv = 0,\\
&\drondt \incv + \Lambda^2 \drondx \incu = \frac 1\jinxinparam(\phi(\incu)-\incv).
  \end{aligned}
  \right.
\end{equation}
The \duqd scheme can be linked to a discretization of the relaxation system \eqref{eq:sys_JX_nh}, by a splitting between the hyperbolic part---discretized with a Lax-Friedrichs scheme---and the relaxation part---discretized with an explicit Euler method---where first \(\Lambda=\lambda\) and second the relaxation parameter \(s\), the time step \(\dt\), and  the Jin and Xin relaxation rate \(\jinxinparam\) are linked by the relation \(\jinxinparam=\dt/s\). 

Relaxation schemes corresponding to a discretization of \eqref{eq:sys_JX_nh} for fixed relaxation rate $\jinxinparam$ were widely studied in the past (\cite{Nat:1996:0}, \cite{AreNat:1996:0} and \cite{LatSer:2001:0}). In the first two papers the authors study the convergence of a discretization of \eqref{eq:sys_JX_nh}, as the mesh step goes to 0, then as the relaxation rate $\jinxinparam$ goes to 0. In \cite{LatSer:2001:0}, the convergence of a relaxation scheme is studied as both parameters, the mesh step and the relaxation rate $\jinxinparam$, go to 0 independently.

In the \duqd scheme, both parameters (mesh step and relaxation rate \(\jinxinparam\)) are linked via the new parameter $s$. Regarding the convergence of the \duqd scheme can thus also be viewed as studying the limit of the scheme as both the space mesh and the relaxation rate \(\jinxinparam=\dt/s\) go to 0 at the same time as the relaxation parameter $s$ is constant.

We emphasise  however that the \duqd scheme is not constructed as a discretization of the relaxation system \eqref{eq:sys_JX_nh}. In comparison with the previously mentioned works, in addition to the differences concerning the role of the parameters, it introduces a different (and {\it not natural} in a finite volume framework) discretization of the source term, the discretization of \eqref{eq:sys_JX_nh} considered in \cite{LatSer:2001:0} being :
\begin{equation*}
\left\lbrace
\begin{aligned}
 & \frac{\ujnp-\ujn[j]}{\dt} 
  + \frac{\vjn[j+1]-\vjn[j-1]}{2\dx} 
  - \Lambda\frac{\ujn[j+1]-2\ujn[j]+\ujn[j-1]}{2\dx}=0\\
&\frac{\vjnp-\vjn[j]}{\dt} 
  + \Lambda^2\frac{\ujn[j+1]-\ujn[j-1]}{2\dx} 
  - \Lambda\frac{\vjn[j+1]-2\vjn[j]+\vjn[j-1]}{2\dx}=\frac{1}{\jinxinparam}
    \bigl(
      \phi(\ujnp)-\vjnp\bigr).
  \end{aligned}
\right.
\end{equation*}

\subsection{Comments on the choice of the formalism} 

As the lattice Boltzmann method was introduced as a pure algorithm and not as a specific discretization of a partial differential equation, the interpretation of the discret quantities (here $\ujn$ and $\vjn$ for instance) as node or cell informations is left up to the user.

Note that recent works manage to rewrite any lattice Boltzmann schemes as a multi-step algorithm only on the conserved variables, that is, for instance in our case, without using the non-conserved variable $\vjn$ \cite{bellotti_finite_2022}. 
It yields for the \duqd scheme
$$
  \ujnp = \tfrac{2-s}{2} (\ujn[j+1]+\ujn[j-1]) - (1-s)\ujnm
  - \frac{s}{2\lambda} (\phi(\ujn[j+1]) - \phi(\ujn[j-1])).
$$
In our present contribution, we do not use this property because it is not really adapted to this work where the dynamics on the particule distribution functions has to be investigated.
However, for smooth solutions, a multi-step finite difference framework can then be used to prove that the algorithm is consistent with the target hyperbolic equation and moreover that the equivalent equation of the scheme---in the sense of the finite difference schemes---is the same that the one obtained by the Taylor expansion method \cite{Dub:2008:0, Gra201400}. This equivalent equation reads
\begin{equation}\label{eq:eqeq}
\drondt\incun + \drondx \phi(\incun) = \dt
\bigl(\tfrac{1}{s}-\tfrac{1}{2}\bigr) \drondx \bigl(
  (\lambda^2 - (\phi'(\incun))^2) \drondx\incun
\bigr) + \mathcal{O}(\dt^2).
\end{equation}
It points out that, if the relaxation parameter $s$ lies in $(0, 2)$, the right member acts as a smoothing second-order operator for large enough scheme velocity $\lambda$.

However, as we are interested in discontinuous solutions that can be involved in the non-linear hyperbolic equations, we retain a cell-centered finite volume interpretation of the discret quantities.
The finite volume formalism of the \duqd scheme allowed us to use specific techniques of finite volume schemes for hyperbolic conservation laws, whose solutions might be discontinuous, to prove the convergence of the discrete solution to the unique entropy solution of the scalar conservation law. At our knowledge, we can not use the same techniques for a general lattice Boltzmann scheme and we do not claim that the lattice Boltzmann schemes are finite volume schemes, in the sense that they are not built as a finite volume discretisation of a particular partial differential equation. We just interpret the discrete quantities in the finite volume formalism in order to use the same objects and techniques of these methods to prove our convergence result. 

\section{Total variation and \texorpdfstring{\(\Linf\)}{supremum} estimations}
\label{sec:TVLinf_est}

In this section we establish \(\Linf\) and \(\BV\) estimates for the numerical scheme~\eqref{eq:sc_one_step_uv}. We begin this section by recalling the definitions of the functional spaces that we will use in the sequel and by establishing the assumptions we make in order to get these estimates.  

\subsection{Notations and definitions}

Let us introduce some notations useful in the sequel. 
If \(\incwn=(\incwn_j)_{j\in\Z}\in\R^\Z\), we denote by 
\(\deltax\incwn:=(\tvx{\incwn}{j+\onehalf})_{j\in\Z}\) 
the sequence of \(\R^\Z\) defined by 
\begin{equation*}
\tvx{\incwn}{j+\onehalf} 
  = \incwn_{j+1} - \incwn_j,
\qquad j\in\Z. 
\end{equation*}
If \(\incwn=(\incwn^n)_{n\in\N}\in\R^\N\), we denote by 
\(\deltat\incwn:=(\tvt{\incwn}{n+\onehalf})_{n\in\N}\) 
the sequence of \(\R^\N\) defined by
\begin{equation*}
\tvt{\incwn}{n+\onehalf}
  =\incwn^{n+1}-\incwn^n,
\qquad n\in\N.
\end{equation*}

In order to investigate the convergence of the numerical solution, 
we recall some classical normed sub-spaces of \(\R^\Z\) and the corresponding norms.

\begin{definition}[\(\ell^1(\R^\Z)\) space, \(\normeun{\cdot}\) norm]
Let \(\incwn = (\incwn_j)_{j\in\Z}\in \R^\Z\) and \(\dx>0\) being given. We define the {\em sequential one norm} by
 \begin{equation*}
 \normeun{\incwn} = \dx \sum_{j\in\Z} \vert \incwn_j\vert ,
 \end{equation*}
and the associated space \(\ell^1(\R^\Z)\) as
\begin{equation*}
 \ell^1(\R^Z) = \bigl\lbrace 
 \incwn \in\R^\Z : \ \normeun{\incwn} < +\infty
 \bigr\rbrace.
\end{equation*}
\end{definition}

\begin{definition}[sequential total variation]
Let \(\incwn = (\incwn_j)_{j\in\Z}\in\R^\Z\).
We define the {\em total variation in space} by 
 \begin{equation*}
 \tv(\incwn) = \sum_{j\in\Z} \vert  \tvx{\incwn}{j+\onehalf} \vert .
 \end{equation*}
\end{definition}

We recall below the classical Banach space of functions with bounded variation and the compactness Helly's theorem that we will use in the proof of our main result. To more details and to the proofs of these results, we address \textsl{e.g.} to \cite{Giu:1984} and to \cite{AmbFusPal:2000}.

\begin{definition}[Bounded variation space]
Let \(\Omega\) be an open set of \(\R^n\).
For \(g\in\Lunloc(\Omega)\), the {\em total variation} of \(g\) over \(\Omega\) denoted by \(\tv_\Omega(g)\) is defined by
\begin{equation*}
\tv_\Omega(g) = 
\sup_{\substack{\varphi\in\Coun(\Omega) \\ \norme{\varphi}_\infty\leq 1}}
\int_\Omega g \divergence \varphi.
\end{equation*}
We denote by \(\BV(\Omega)\) the subspace of \(\Lunloc(\Omega)\) of functions with bounded total variation over \(\Omega\):
\begin{equation*}
\BV(\Omega) = \bigl\lbrace
g\in\Lunloc(\Omega): \quad \tv_\Omega(g) < +\infty
\bigr\rbrace.
\end{equation*}
\end{definition}

\begin{proposition*}[Functional space]
The space \(\Lun(\Omega)\cap\BV(\Omega)\) is a Banach space 
for the norm
\begin{equation*}
\norme{g} = \norme{g}_{\Lun(\Omega)} + \tv_\Omega(g).
\end{equation*}
\end{proposition*}

\begin{theorem*}[Helly's sequential theorem]
Let \(\Omega\subset\R^n\) be a bounded open set with a Lipschitz boundary and let \({(g_n)}_{n\in\N}\) be a sequence which is bounded in \(\Lun(\Omega)\cap\BV(\Omega)\). Then there exists a sub-sequence denoted by 
\((g_{\varphi(n)})_{n\in\N}\) and a function \(g\in\Lun(\Omega)\cap\BV(\Omega)\) such that
\begin{equation*}
\left\lbrace
\begin{aligned}
&g_{\varphi(n)} \longrightarrow g \ \text{in }\Lun(\Omega) \text{ and } \textup{a.e. } \varx\in\Omega,\\
&\tv_\Omega(g)\leq \liminf_{n\in\N} \;\tv_\Omega (g_{\varphi(n)}).
\end{aligned}
\right.
\end{equation*}
\end{theorem*}

In the sequel, we use the following notations for the sequences in \(\R^\Z\)
\begin{equation*}
  \fpmn = \bigl( \fpmjn \bigr)_{j\in\Z},
\quad  \un  = \bigl( \ujn  \bigr)_{j\in\Z},
\quad \vn  = \bigl( \vjn  \bigr)_{j\in\Z},
\quad \phi(\un)  = \bigl( \phi(\ujn)  \bigr)_{j\in\Z},
\end{equation*}
for \(n\in\N\), \(\pm\in\lbrace +,-\rbrace\), and we use the following notations for the sequences in \(\R^\N\)
\begin{equation*}
 \fpmj = \bigl( \fpmjn \bigr)^{n\in\N},
\quad \uj  = \bigl( \ujn  \bigr)^{n\in\N},
\quad \vj  = \bigl( \vjn  \bigr)^{n\in\N},
\quad \phi(\uj)  = \bigl( \phi(\ujn)  \bigr)^{n\in\N},
\end{equation*}
for \(j\in\Z\), \(\pm\in\lbrace +,-\rbrace\).

Let us also introduce the functions 
\begin{equation}\label{defhphm} 
\hp(\xi) = \frac{\lambda\xi+\phi(\xi)}{2\lambda},
\qquad
\hm(\xi) = \frac{\lambda\xi-\phi(\xi)}{2\lambda},
\qquad
\xi\in\R.
\end{equation}
The functions \(\hp\) and \(\hm\) correspond to the equilibrium of the particle distribution functions \(\fp\) and \(\fm\).
From relations \eqref{eq:sc_uv_relstep} and \eqref{eq:sc_f_relstep}, the relaxation phase can be rewritten as
\begin{equation}\label{eq:rel_f_comb_conv}
\fpmjnpd =  (1-s) \fpmjn + s\hpm(\ujn), \quad  
j\in\Z,  \ n\in\N.
\end{equation}

\subsection{Assumptions}

We suppose throughout the paper that the initial condition \(\incu^0\) belongs to \(\Linf(\R)\cap\BV(\R)\). We then define 
\begin{gather*}
 \alpha = \essinf\incu^0,
 \qquad 
 \beta = \esssup \incu^0,\\
 M = \max\bigl\lbrace\vert \phi^\prime(\xi)\vert , \text{ for } \alpha\leq\xi\leq \beta\bigr\rbrace. 
 \end{gather*} 
 and we make the following main assumptions concerning the numerical scheme~\eqref{eq:sc_one_step_uv}.

\begin{hyp}\label{hyp:s}
The relaxation parameter \(s\) lies in \((0,1]\).
\end{hyp}

Often in the applications, the value of the relaxation parameter \(s\) is chosen larger than 1 and often close to 2. 
In the case of the linear \duqd, the scheme remains numerically stable in a \(\ell^2\)-sense for \(s\in[0,2]\) and the first-order numerical diffusion term is proportional to \(1/s-1/2\): the choice \(s=2\) minimises then the error and the convergence rate is equal to 2 in that case (see \cite{Gra201400}). 
For general lattice Boltzmann schemes, the ``optimal'' choices for the relaxation parameters are more complicated and are motivated by a combination of stability and accuracy reasons. However, these optimal parameters are generally larger than 1.
The reason why we impose that \(s\) lies in \((0,1]\) is that the scheme has monotonicity properties in this case. And these properties are essential for dealing with weak-solutions according to our technique of estimates. 

\begin{hyp}\label{hyp:CFL}
The velocity of the scheme satisfies \(\lambda=\dx/\dt \geq M\).
\end{hyp}

This assumption is known as the {\itshape sub-characteristic condition\/}. It states that the solutions of the equilibrium equation propagate with a characteristic speed smaller than that of the numerical scheme.

\begin{hyp}\label{hyp:initeq}
The initial state is given by
\begin{equation*}
 \ujz = \frac{1}{\dx}
   \int_{\xj}^{\xj[j+1]}\incu^0(\varx) \ddd\varx,
 \qquad \vjz = \phi(\ujz),
 \qquad j\in\Z.
\end{equation*}
\end{hyp}

This last assumption means that the initial values of the non-conserved moment are taken to the equilibrium. This choice is done more often than not as this second moment has to be a perturbation of the equilibrium state \cite{LalLuo:2000:0}.

\subsection{Preliminary lemmas} 

We begin by proving two lemmas that will be used throughout the proofs of the estimates and convergence of the numerical solutions. The first one concerns the total variation of the numerical initial data.

\begin{lemma}
  \label{lem_VTu0}
 The total variation of the discretized conserved moment at initial time \(\un[0]=(\ujz)_{j\in\Z}\) is controlled by the total variation of the initial function \(\incu^0\):
 \begin{equation*}
 \tv(\un[0]) = \sum_{j\in\Z}\vert \tvx{\incun^0}{j+\f}\vert  \leq \tv_\R(\incu^0).
 \end{equation*}
\end{lemma}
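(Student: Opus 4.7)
My plan is to express each difference $\tvx{\incun^0}{j+\onehalf} = \ujz[j+1] - \ujz$ in terms of an integral of the translation $\incu^0(\cdot + \dx) - \incu^0(\cdot)$, and then use the well-known $\BV$ inequality
\[
\int_\R \bigl| \incu^0(\varx + h) - \incu^0(\varx) \bigr| \, \ddd\varx \leq |h| \, \tv(\incu^0), \qquad h \in \R.
\]

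First I would write
\[
\tvx{\incun^0}{j+\onehalf}
= \frac{1}{\dx} \int_{\xj[j+1]}^{\xj[j+2]} \incu^0(\varx) \, \ddd\varx - \frac{1}{\dx} \int_{\xj}^{\xj[j+1]} \incu^0(\varx) \, \ddd\varx,
\]
and, performing the translation $\varx \mapsto \varx + \dx$ in the first integral (so that both integrals are over $[\xj, \xj[j+1]]$), combine them into
\[
\tvx{\incun^0}{j+\onehalf}
= \frac{1}{\dx} \int_{\xj}^{\xj[j+1]} \bigl( \incu^0(\varx + \dx) - \incu^0(\varx) \bigr) \, \ddd\varx.
\]
Taking absolute values, pulling them inside the integral, and summing over $j \in \Z$ (the intervals $[\xj, \xj[j+1]]$ forming a partition of $\R$), I obtain
\[
\tv(\un[0])
= \sum_{j \in \Z} \bigl| \tvx{\incun^0}{j+\onehalf} \bigr|
\leq \frac{1}{\dx} \int_\R \bigl| \incu^0(\varx + \dx) - \incu^0(\varx) \bigr| \, \ddd\varx.
\]

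Finally I would apply the classical translation estimate recalled above with $h = \dx$ to conclude $\tv(\un[0]) \leq \tv(\incu^0)$. The translation inequality itself is a standard fact for $\BV$ functions, obtained by approximation (either by smooth functions, using $\incu^{0\prime}$ as a bounded measure, or directly from the definition of $\tv(\incu^0)$ by a density argument with test functions), and I would simply quote it from the references \cite{Giu:1984}, \cite{AmbFusPal:2000} already cited in the excerpt. There is no real obstacle here: the only point requiring a little care is the translation-combination step to get the difference $\incu^0(\varx+\dx) - \incu^0(\varx)$ under a single integral on $[\xj, \xj[j+1]]$, which is purely a change of variable.
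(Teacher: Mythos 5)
Your proposal is correct and follows essentially the same route as the paper: reduce $\tv(\un[0])$ to the quantity $\frac{1}{\dx}\int_\R\vert\incu^0(\varx+\dx)-\incu^0(\varx)\vert\,\ddd\varx$ via the change of variables on each cell, then invoke the translation estimate for $\BV$ functions. The only difference is that the paper proves that translation estimate in full (first for $\Cun$ functions by the fundamental theorem of calculus and Fubini, then for general $\BV$ data by smooth approximation), whereas you quote it from the references --- which is legitimate, since it is the standard fact and your sketch of its proof matches the paper's argument.
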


\begin{proof}
This result is commonly used in the context of finite volumes approximation of conservation laws; we propose a precise proof in appendix \ref{ap_lem_VTu0}.
\end{proof}

The second lemma concerns some properties of the functions \(\hp\) and \(\hm\) defined by \eqref{defhphm}.

\begin{lemma}\label{thm:hphm}
  Under assumption \ref{hyp:CFL}, the functions \(\hp\) and \(\hm\) are non decreasing over \([\alpha, \beta]\). Moreover,
    for all $\xi\in\R,\ $\(\hp(\xi)+\hm(\xi)=\xi\) and  
      \(\hpprime(\xi)+\hmprime(\xi)=1\).
\end{lemma}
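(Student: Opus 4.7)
The plan is immediate from the explicit formulas \eqref{defhphm}. Since $\phi\in\mathcal{C}^1(\R)$, both $\hp$ and $\hm$ are $\mathcal{C}^1$ on $\R$, and I would differentiate to obtain
\[
\hpprime(\xi) = \frac{\lambda+\phi^\prime(\xi)}{2\lambda}, \qquad \hmprime(\xi) = \frac{\lambda-\phi^\prime(\xi)}{2\lambda}.
\]

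For the monotonicity statement, I would invoke assumption~\ref{hyp:CFL}: for every $\xi\in[\alpha,\beta]$, $|\phi^\prime(\xi)|\le M\le\lambda$, so $\lambda\pm\phi^\prime(\xi)\ge 0$. This shows that both $\hpprime$ and $\hmprime$ are non-negative on $[\alpha,\beta]$, hence $\hp$ and $\hm$ are non-decreasing there.

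For the second item, summing the defining relations \eqref{defhphm} gives
\[
\hp(\xi)+\hm(\xi) = \frac{(\lambda\xi+\phi(\xi))+(\lambda\xi-\phi(\xi))}{2\lambda} = \xi,
\]
and differentiating this identity (or summing the two expressions for the derivatives above) yields $\hpprime(\xi)+\hmprime(\xi)=1$.

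There is no real obstacle here; the lemma is a direct consequence of the sub-characteristic condition through the sign of $\lambda\pm\phi^\prime$. The only point worth emphasizing is that the monotonicity is stated on $[\alpha,\beta]$ precisely because $M$ is defined as a supremum of $|\phi^\prime|$ over this interval, so the argument does not extend globally unless $\phi^\prime$ is uniformly bounded by $\lambda$ on all of $\R$.
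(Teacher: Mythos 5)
Your proof is correct and follows the same route as the paper's (which is just a terser version of the same computation): differentiate the explicit formulas, use the sub-characteristic condition to get $\lambda\pm\phi^\prime\ge 0$ on $[\alpha,\beta]$, and sum the definitions for the second item. Nothing to add.
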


\begin{proof}
 The functions \(\hp\) and \(\hm\) are \(\Cun(\R)\) and their derivative is non negative over \([\alpha,\beta]\), under assumption~\ref{hyp:CFL}. The second result of the lemma is a trivial consequence of the definition of the functions \(\hpm\).
\end{proof}

\subsection{Uniform bound estimates}

In this section we establish a maximum principle for the numerical solutions of the scheme \eqref{eq:sc_one_step_uv}. The technique of the proof is the same as in \cite{AreNat:1996:0} or \cite{LatSer:2001:0}. For the clearness of our work we present it here.

\begin{proposition}[maximum principle]\label{th:proplinfty}
Under assumptions~\ref{hyp:s}, \ref{hyp:CFL} and \ref{hyp:initeq}, we have
\begin{equation*}
 \ujn\in[\alpha,\beta], 
\qquad
 \fpmjn\in[\hpm(\alpha),\hpm(\beta)] ,
\qquad j\in\Z, \ n\in\N.
\end{equation*}
\end{proposition}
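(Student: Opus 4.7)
The plan is to prove both bounds simultaneously by induction on $n\in\N$, using the particle distribution formulation rather than the moment one. The reason is that in the $(\fp,\fm)$ variables, the scheme becomes a convex combination (thanks to $s\in(0,1]$) followed by a pure translation, which preserves any componentwise bound trivially. I will then recover the bound on $\ujn$ via the identity $\hp(\xi)+\hm(\xi)=\xi$ from Lemma~\ref{thm:hphm}.

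First, I would handle the base case $n=0$. By assumption~\ref{hyp:initeq}, $\ujz$ is the mean of $\incu^0$ on $[\xj,\xj[j+1]]$, hence lies in $[\alpha,\beta]$ by definition of $\alpha,\beta$. Moreover $\vjz=\phi(\ujz)$, so a direct computation using \eqref{eq:sc_f_relstep}-type formulas gives $\fpmjz=\hpm(\ujz)$, and the monotonicity of $\hpm$ on $[\alpha,\beta]$ (Lemma~\ref{thm:hphm}, which requires assumption~\ref{hyp:CFL}) yields $\fpmjz\in[\hpm(\alpha),\hpm(\beta)]$.

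Next, the induction step. Assuming both bounds hold at level $n$, I use the particle distribution form of the relaxation phase \eqref{eq:rel_f_comb_conv},
\begin{equation*}
\fpmjnpd = (1-s)\fpmjn + s\,\hpm(\ujn),
\end{equation*}
and observe that under assumption~\ref{hyp:s} this is a \emph{convex combination}. By the induction hypothesis $\fpmjn\in[\hpm(\alpha),\hpm(\beta)]$, and by the monotonicity of $\hpm$ applied to $\ujn\in[\alpha,\beta]$ we also get $\hpm(\ujn)\in[\hpm(\alpha),\hpm(\beta)]$; hence $\fpmjnpd$ stays in the same interval. The transport phase \eqref{eqq:sc_f_trstep} only relabels indices, so $\fpmjnp\in[\hpm(\alpha),\hpm(\beta)]$ as well. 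Finally, writing $\ujnp=\fpjnp+\fmjnp$ and using the key identity $\hp+\hm=\mathrm{id}$ from Lemma~\ref{thm:hphm},
\begin{equation*}
\alpha=\hp(\alpha)+\hm(\alpha)\le \ujnp \le \hp(\beta)+\hm(\beta)=\beta,
\end{equation*}
closing the induction.

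There is no real obstacle here: the proof is essentially a bookkeeping induction. The only subtlety is choosing the right variables, since writing the scheme in the moment form \eqref{eq:sc_one_step_uv} would obscure the convex-combination structure. The two assumptions of the proposition play clearly distinct roles: assumption~\ref{hyp:s} gives the nonnegative convex-combination weights in the relaxation phase, while assumption~\ref{hyp:CFL} guarantees the monotonicity of $\hp$ and $\hm$, which is what allows one to ``close'' the intervals $[\hpm(\alpha),\hpm(\beta)]$ under the application of $\hpm$ to $\ujn$.
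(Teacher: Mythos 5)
Your proof is correct and follows essentially the same route as the paper's: an induction in the particle-distribution variables, using that the relaxation step \eqref{eq:rel_f_comb_conv} is a convex combination under assumption~\ref{hyp:s}, that $\hpm$ is monotone on $[\alpha,\beta]$ under assumption~\ref{hyp:CFL}, that transport is a mere index shift, and that $\hp+\hm=\mathrm{id}$ recovers the bound on $\ujn$. The only cosmetic difference is that you spell out the role of the monotonicity of $\hpm$ a little more explicitly than the paper does.
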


\begin{proof}
A recursive reasoning is done. Since \(\incu^0\in[\alpha,\beta], a.e.\ \varx\in\R\), we have that 
\(\ujz\in[\alpha,\beta]\) for \(j\in\Z\). 
By lemma~\ref{thm:hphm}, and since \(\vjz=\phi(\ujz)\), we get
\begin{equation*}
\fpmjz = \hpm(\ujz)\in[\hpm(\alpha),\hpm(\beta)],
\qquad j\in\Z.
\end{equation*}

We then assume that the three inclusions are true for a certain \(n\in\N\).
As \(s\in(0,1]\), Eq.~\eqref{eq:rel_f_comb_conv} implies that \(\fmjnpd\) and \(\fpjnpd\) are respectively convex linear combinations of \(\fmjn\) and \(\hm(\ujn)\), and of \(\fpjn\) and \(\hp(\ujn)\), so that 
\(\fmjnpd\in[\hm(\alpha),\hm(\beta)]\)
and
\(\fpjnpd\in[\hp(\alpha),\hp(\beta)]\), for \(j\in\Z\).
The transport phase just shift the distribution functions, so that the same inclusions yield for \(\fmjnp\) and \(\fpjnp\) for \(j\in\Z\). Finally, we have
\begin{equation*}
\ujnp = \fmjnp + \fpjnp \in [ \hm(\alpha)+\hp(\alpha) , \hm(\beta)+\hp(\beta) ] = [\alpha, \beta],
\qquad j\in\Z,
\end{equation*}
as \(\hm(\xi)+\hp(\xi)=\xi\) for all \(\xi\in\R\), by lemma \ref{thm:hphm}.
\end{proof}

\subsection{Total variation estimates}

We establish now estimates on the total variation in space and in time of the numerical solutions. This kind of estimates use now classical tools in the framework of finite volume schemes for hyperbolic conservation laws (see for instance \cite{CraMaj:1980:0}), and are similar but slightly different to the ones of \cite{AreNat:1996:0} and of \cite{Cae:2006:0}. For the completeness of our work we present the proofs here.

\begin{proposition}[Spatial total variation estimations]\label{th:propVT}
Under assumptions~\ref{hyp:s}, \ref{hyp:CFL} and \ref{hyp:initeq},
the particle distributions functions satisfy the total variation decreasing estimate
\begin{equation}\label{eq:tvd_f}
 \tv(\fpnp) + \tv(\fmnp) \leq  \tv(\fpn) + \tv(\fmn), \qquad n\in\N.
\end{equation}
Moreover, we have
\begin{align}
\label{eq:tv_f}
&\tv(\fpn)+\tv(\fmn)\leq \tv_\R(\incu^0), &n\in\N,\\
\label{eq:tv_u}
&\tv(\un)\leq \tv_\R(\incu^0), & n\in\N,\\
\label{eq:tv_v}
&\tv(\vn)\leq \lambda\tv_\R(\incu^0), & n\in\N.
\end{align}
\end{proposition}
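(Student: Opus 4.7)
My plan is to prove the TVD estimate \eqref{eq:tvd_f} first, from which \eqref{eq:tv_f}, \eqref{eq:tv_u} and \eqref{eq:tv_v} all follow either by induction or by the linear relations between $(\fm,\fp)$ and $(\incu,\incv)$. The first observation is that the transport phase \eqref{eqq:sc_f_trstep} is a pure translation along each component, so $\tv(\fpnp)=\tv(\fpnpd)$ and $\tv(\fmnp)=\tv(\fmnpd)$: it suffices to establish the TV bound for the post-relaxation distributions in terms of the pre-relaxation ones.

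The heart of the argument lies in the relaxation step \eqref{eq:rel_f_comb_conv}. Taking the difference at two consecutive points produces
\begin{equation*}
\tvx{\fpmnpd}{j+\onehalf}
  = (1-s)\,\tvx{\fpmn}{j+\onehalf}
  + s\,\bigl(\hpm(\ujn[j+1])-\hpm(\ujn)\bigr), \qquad j\in\Z.
\end{equation*}
Assumption \ref{hyp:s} gives $1-s\geq 0$, and Lemma~\ref{thm:hphm} combined with Proposition~\ref{th:proplinfty} ensures that $\hp$ and $\hm$ are both non-decreasing on the range of $(\ujn)_{j\in\Z}$. Since moreover $\hp+\hm=\mathrm{id}$, the two differences $\hp(\ujn[j+1])-\hp(\ujn)$ and $\hm(\ujn[j+1])-\hm(\ujn)$ share the same sign and their absolute values add up exactly to $|\ujn[j+1]-\ujn|=|\tvx{\fmn}{j+\onehalf}+\tvx{\fpn}{j+\onehalf}|$. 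Taking absolute values in the displayed identity, summing the $+$ and $-$ contributions, and using the triangle inequality twice yields
\begin{equation*}
|\tvx{\fmnpd}{j+\onehalf}|+|\tvx{\fpnpd}{j+\onehalf}|
  \leq |\tvx{\fmn}{j+\onehalf}|+|\tvx{\fpn}{j+\onehalf}|,
\end{equation*}
and summing over $j\in\Z$ gives \eqref{eq:tvd_f}.

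Iterating the TVD estimate from $n=0$ produces $\tv(\fpn)+\tv(\fmn)\leq \tv(\fpn[0])+\tv(\fmn[0])$. Assumption \ref{hyp:initeq} prescribes $\vjz=\phi(\ujz)$, so the initial distributions satisfy $\fpmjz=\hpm(\ujz)$, and the same joint identity used above yields $\tv(\fpn[0])+\tv(\fmn[0])=\tv(\un[0])$, which is bounded by $\tv(\incu^0)$ by the preceding lemma; this proves \eqref{eq:tv_f}. Estimates \eqref{eq:tv_u} and \eqref{eq:tv_v} then follow directly from $\ujn=\fmjn+\fpjn$ and $\vjn=\lambda(\fpjn-\fmjn)$ by the triangle inequality applied term by term. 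The main subtlety is the simultaneous treatment of both $\pm$ components: Lipschitz estimates on $\hp$ and $\hm$ applied separately would yield a combined constant strictly larger than one and destroy the non-expansiveness, whereas the identity $\hp+\hm=\mathrm{id}$ combined with the common monotonicity furnishes the exact cancellation needed to obtain a nonexpansive relaxation step, which is the whole point of the sub-characteristic condition.
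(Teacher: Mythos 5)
Your proof is correct and follows essentially the same route as the paper: convexity of the relaxation step in $s$, monotonicity of $\hp$ and $\hm$ on $[\alpha,\beta]$ together with $\hp+\hm=\mathrm{id}$ to get the exact cancellation when summing the $\pm$ contributions, invariance of the total variation under the transport shift, induction to $n=0$, and the equilibrium initial data to identify $\tv(\fpn[0])+\tv(\fmn[0])$ with $\tv(\un[0])$. The only (immaterial) difference is that the paper expresses the key cancellation via the mean value theorem and the derivative identity $\hpprime+\hmprime=1$, whereas you argue directly from the common sign of the increments of $\hp$ and $\hm$.
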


\begin{proof}
First we remark that we have
\(\tv(\un) \leq \tv(\fmn)+\tv(\fpn)\), as \(\ujn=\fmjn+\fpjn\). 
We first evaluate the total variation of the approximated particle  distributions functions after the relaxation phase. 
We deduce from \eqref{eq:rel_f_comb_conv} that
\begin{equation*}
  \tvx{\fpmn[n+\f]}{j+\f} 
  = (1-s) \tvx{\fpmn}{j+\f} 
  + s \bigl(
    \hpm(\ujn[j+1]) - \hpm(\ujn)
  \bigr).
\end{equation*}
And we have, by performing a first order Taylor expansion, 
\begin{multline*}
\hpm(\ujn[j+1]) - \hpm(\ujn) 
= \frac{
  \lambda\tvx{\un[n]}{j+\f}
  \pm(\phi(\ujn[j+1]) 
  - \phi(\ujn))
}{2\lambda} \\
= \frac{
  (\lambda \pm \phi^\prime(\xi_{j+\f}^n)) \tvx{\un[n]}{j+\f}
}{2\lambda}
= \hpmprime(\xi_{j+\f}^n) \tvx{\un[n]}{j+\f},
\end{multline*}
where \(\xi_{j+\f}^{n}\in(\alpha,\beta)\) is such that
\(\phi(\ujn[j+1])-\phi(\ujn) = \phi^\prime(\xi_{j+\f}^n)(\ujn[j+1]-\ujn)\). We then have, using that \(\hpmprime\ge0\) over \([\alpha,\beta]\) and that \(s\in(0,1]\), 
\begin{equation*}
 \bigl\vert  \tvx{\fpmn[n+\f]}{j+\f} \bigr\vert 
 \leq (1-s) \bigl\vert  \tvx{\fpmn}{j+\f} \bigr\vert  
 + s \hpmprime(\xi_{j+\f}^{n}) \bigl\vert  \tvx{\un}{j+\f}\bigr\vert .
\end{equation*}
Since \(\hpprime+\hmprime=1\), summing the above two inequalities over \(j\in\Z\), for \(\pm\in\{+,-\}\), yields
\begin{equation*}
 \tv(\fpnpd) + \tv(\fmnpd) \leq (1-s) \tv(\fpn) + (1-s) \tv(\fmn)
 + s \tv(\un),
\end{equation*}
and finally, 
\begin{equation*}
 \tv(\fpnpd) + \tv(\fmnpd) \leq  \tv(\fpn) + \tv(\fmn).
\end{equation*}
Concerning the transport phase, the total variation of the particle distributions functions after this phase is unchanged as they are defined as a translation of the particle distributions before the phase. We have thus
\begin{equation*}
\tv(\fpnp) + \tv(\fmnp) = \tv(\fpnpd) + \tv(\fmnpd) \leq  \tv(\fpn) + \tv(\fmn),
\end{equation*}
which allow us to conclude that
\begin{equation}\label{eq_vtfpm_vtf0}
\tv(\fpn) + \tv(\fmn)\leq \tv(\fmn[0]) + \tv(\fpn[0]),\qquad \forall n \in \N.
\end{equation}

We also have for the total variation of the conserved moment \(\un\)
\begin{equation}\label{eq_vtu_vtu0}
 \tv(\un)\leq \tv(\fmn)+\tv(\fpn) \leq \tv(\fmn[0]) + \tv(\fpn[0]).
\end{equation}
The definition of the initial numerical data in assumption~\ref{hyp:initeq} implies that
\begin{equation*}
 \bigl\vert \tvx{\fpmn[0]}{j+\f}\bigr\vert  
 = \bigl\vert \hpm(\ujz[j+1]) - \hpm(\ujz)\bigr\vert  
 = \hpmprime(\xi_{j+\f}^0) \bigl\vert \tvx{\un[0]}{j+\f}\bigr\vert .
\end{equation*}
By summing these two equalities over \(j\in\Z\), for \(\pm\in\{+,-\}\), we get 
\(\tv(\fmn[0]) + \tv(\fpn[0]) = \tv(\un[0])\), which combined with \eqref{eq_vtfpm_vtf0} and with \eqref{eq_vtu_vtu0} gives \eqref{eq:tv_f} and \eqref{eq:tv_u}.
Since \(\vert \vn\vert \le\lambda(\vert \fpn\vert +\vert \fmn\vert )\), we also obtain \eqref{eq:tv_v}.
\end{proof}

In order to control the total variation of the numerical approximation in time and space variables, we prove now uniform total time variation estimates for the approximated solutions. To do so, we estimate the quantity
\begin{equation*}
\sum_{j\in\Z} 
 \left\vert \tvt{\fpj}{n+\f}\right\vert 
+
\left\vert \tvt{\fmj}{n+\f}\right\vert .
\end{equation*}

\begin{proposition}[total variation in time estimates]
\label{th:propVTt}
Under assumptions \ref{hyp:s}, \ref{hyp:CFL} and \ref{hyp:initeq},
the particle distributions functions satisfy the estimate
\begin{multline}\label{eq:tvt_f}
  \sum_{j\in\Z}
  \left\vert \tvt{\fpj}{n+\f}\right\vert 
  +
  \left\vert \tvt{\fmj}{n+\f}\right\vert \\
  \leq 
  \sum_{j\in\Z}
  \left\vert \tvt{\fpj}{n-\f}\right\vert 
  +
  \left\vert \tvt{\fmj}{n-\f}\right\vert\leq 2\tv_\R(\incu^0), \qquad n\in\N.
\end{multline}
Moreover we have 
\begin{equation}\label{eq:tvt_u}
\sum_{j\in\Z}\left\vert \tvt{\uj}{n+\f}\right\vert \leq 2\tv_\R(\incu^0), 
\qquad
\sum_{j\in\Z}\left\vert \tvt{\vj}{n+\f}\right\vert 
  \leq 2\lambda\tv_\R(\incu^0), 
\qquad n\in\N.
\end{equation}
\end{proposition}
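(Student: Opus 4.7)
The plan is to establish (\ref{eq:tvt_f}) by a telescoping monotonicity argument reducing the general time step to the first, combined with an explicit base-case computation using the equilibrium initial condition; the bounds (\ref{eq:tvt_u}) will then follow by the triangle inequality.

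For the first (telescoping) step, I would start from the combined relaxation-transport update in the $(\fmn,\fpn)$ variables (obtained from (\ref{eq:rel_f_comb_conv}) and (\ref{eqq:sc_f_trstep})), which expresses $\fmjnp$ as an affine combination of $\fmjn[j+1]$, $\fpjn[j+1]$ and $\phi(\ujn[j+1])$, and symmetrically $\fpjnp$ in terms of the corresponding quantities at $j-1$. Writing this identity also at the previous time step, subtracting, and applying the mean value theorem to $\phi(\ujn[j+1]) - \phi(u_{j+1}^{n-1}) = \phi'(\xi)(\ujn[j+1] - u_{j+1}^{n-1})$ for some $\xi$, I would obtain that $\fmjnp - \fmjn$ is a linear combination of $\fmjn[j+1] - f_{-,j+1}^{n-1}$ and $\fpjn[j+1] - f_{+,j+1}^{n-1}$ with coefficients $1 - s\hpprime(\xi)$ and $s\hmprime(\xi)$ respectively---the regrouping of terms being guided by the definition (\ref{defhphm}) of $\hpm$. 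The analogous identity for $\fpjnp - \fpjn$ involves coefficients $s\hpprime$ and $1-s\hmprime$ acting on the increments at $j-1$. By Proposition \ref{th:proplinfty}, $\xi \in [\alpha,\beta]$, and Lemma \ref{thm:hphm} gives $\hpmprime(\xi) \in [0,1]$; together with Assumption \ref{hyp:s}, this places all four coefficients in $[0,1]$. Taking absolute values, summing over $j$ (with an index shift), and adding the two identities---the $\hpprime$ and $\hmprime$ coefficients collapsing via $\hpprime + \hmprime = 1$---yields the first inequality of (\ref{eq:tvt_f}).

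For the base case, Assumption \ref{hyp:initeq} gives $\vjz = \phi(\ujz)$, so the relaxation step (\ref{eq:sc_uv_relstep}) is trivial at $n=0$ and leaves the distributions unchanged. The subsequent transport (\ref{eqq:sc_f_trstep}) then yields $\tvt{\fpj}{\f} = \fpjz[j-1] - \fpjz$ and $\tvt{\fmj}{\f} = \fmjz[j+1] - \fmjz$, so that
\begin{equation*}
\sum_{j\in\Z} |\tvt{\fpj}{\f}| + |\tvt{\fmj}{\f}| = \tv(\fpn[0]) + \tv(\fmn[0]) = \tv(\un[0]) \leq \tv(\incu^0),
\end{equation*}
the penultimate equality being the identity derived in the proof of Proposition \ref{th:propVT}. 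Iterating the telescoping inequality down to $n=0$ then gives the stated bound $2\tv(\incu^0)$.

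For (\ref{eq:tvt_u}) I would use $\ujnp - \ujn = (\fpjnp - \fpjn) + (\fmjnp - \fmjn)$ and $\vjnp - \vjn = \lambda((\fpjnp - \fpjn) - (\fmjnp - \fmjn))$, and conclude by the triangle inequality together with (\ref{eq:tvt_f}). The main obstacle is the first step: verifying non-negativity of all four convex-combination coefficients. Non-negativity of $s\hpmprime$ requires the sub-characteristic condition (Assumption \ref{hyp:CFL}), while non-negativity of $1-s\hpmprime$ requires $s \leq 1$ (Assumption \ref{hyp:s})---the latter being precisely the restriction on the relaxation parameter that delimits the scope of the convergence analysis in this paper.
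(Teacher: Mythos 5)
Your proof is correct and follows essentially the same route as the paper's: the same telescoping time-monotonicity derived from the combined relaxation--transport update, the mean value theorem giving \(\hpmprime(\xi)\in[0,1]\) under Assumptions~\ref{hyp:s} and~\ref{hyp:CFL}, the same equilibrium base case at \(n=0\), and the same triangle-inequality deduction of \eqref{eq:tvt_u}. The only (harmless) differences are that you substitute the decomposition of the increment of \(u\) into the increments of \(f_\pm\) \emph{before} taking absolute values, turning each step into an explicit convex combination, and that your base case actually yields the sharper bound \(\tv(\incu^0)\) in place of \(2\tv(\incu^0)\).
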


\begin{proof}
Suppose that
\begin{equation*}
\sum_{j\in\Z}
\left\vert \tvt{\fpj}{n-\f}\right\vert 
+ \left\vert \tvt{\fmj}{n-\f}\right\vert 
< +\infty.
\end{equation*} 
Since \(\incun=\fp+\fm\), then we also have 
\begin{equation*}
\sum_{j\in\Z}\left\vert \tvt{\uj}{n-\f}\right\vert < +\infty.
\end{equation*}
From \eqref{eqq:sc_f_trstep} and \eqref{eq:rel_f_comb_conv}, we have \(\fpmjnp=
\fpmjnpd[j\mp1]=(1-s)\fpmjn[j\mp1]+s\hpm(\ujn[j\mp1])\). We obtain then
\begin{multline*}
\tvt{\fpmj}{n+\f}
  = (1-s) \tvt{\fpmj[j\mp1]}{n-\f}
    + s \bigl(
      \hpm(\ujn[j\mp1]) - \hpm(\ujnm[j\mp1])
    \bigr) \\
  = (1-s) \tvt{\fpmj[j\mp1]}{n-\f}
    + s \frac{\lambda\pm\phi^\prime(\xi_{j\mp1}^{n-\onehalf})}{2\lambda}\tvt{\uj[j\mp1]}{n-\f},
\end{multline*}
where we used that, for \(j\in\Z\),
\begin{equation*}
\hpm(\ujn)-\hpm(\ujnm) = 
\hpmprime(\xi_j^{n-\onehalf})\tvt{\uj}{n-\f} = 
  \frac{\lambda\pm\phi^\prime(\xi_j^{n-\onehalf})}{2\lambda}
  \tvt{\uj}{n-\f},
\end{equation*}
with \(\xi_j^{n-\onehalf}\) lying between \(\ujn\) and \(\ujnm\).

Assumptions  \ref{hyp:s} and \ref{hyp:CFL} yield
\begin{equation*}
\left\vert \tvt{\fpmj}{n+\f} \right\vert 
\leq (1-s)
\left\vert \tvt{\fpmj[j\mp1]}{n-\f} \right\vert 
+ s \frac{\lambda\pm\phi^\prime(\xi_{j\mp1}^{n-\onehalf})}{2\lambda}
\left\vert \tvt{\uj[j\mp1]}{n-\f} \right\vert ,
\end{equation*}
By summing the above two inequalities over \(j\in\Z\), for \(\pm\in\lbrace+,-\rbrace\), we get 
\begin{multline*}
  \sum_{j\in\Z} \left( 
    \left\vert \tvt{\fpj}{n+\f} \right\vert 
    + \left\vert \tvt{\fmj}{n+\f} \right\vert 
  \right) \\ 
  \leq (1-s) \sum_{j\in\Z} \left(
    \left\vert \tvt{\fpj}{n-\f} \right\vert 
    + \left\vert \tvt{\fmj}{n-\f} \right\vert 
  \right)
  + s \sum_{j\in\Z} \left\vert \tvt{\uj}{n-\f} \right\vert .
\end{multline*}
Since we have 
\(\vert \incun\vert \leq \vert \fpnum \vert + \vert \fmnum \vert\), we get
\begin{equation*}
\sum_{j\in\Z} \left(
  \left\vert \tvt{\fpj}{n+\f} \right\vert 
  + \left\vert \tvt{\fmj}{n+\f} \right\vert 
\right)
\leq \sum_{j\in\Z} \left(
  \left\vert \tvt{\fpj}{n-\f} \right\vert 
  + \left\vert \tvt{\fmj}{n-\f} \right\vert 
\right).
\end{equation*}
We conclude then, by reasoning recursively, that
\begin{equation}\label{eq:locestVTt}
\sum_{j\in\Z} \left(
  \left\vert \tvt{\fpj}{n+\f} \right\vert 
  + \left\vert \tvt{\fmj}{n+\f} \right\vert 
\right)
\leq 
\sum_{j\in\Z} \left(
  \left\vert \tvt{\fpj}{\f} \right\vert 
  + \left\vert \tvt{\fmj}{\f} \right\vert 
\right),\ \forall n \in\N.
\end{equation}
Now, we have, by using assumption~\ref{hyp:initeq},
\begin{multline*}
\tvt{\fpj}{\f} =
\vert \fpju - \fpjz \vert
= \vert (1-s) \fpjz[j-1] + s \hp(\ujz[j-1])-\fpjz \vert \\
= \vert \hp(\ujz[j-1])-\hp(\ujz) \vert 
\leq \vert \ujz[j-1] - \ujz \vert ,
\end{multline*}
and, in the same way,
\begin{equation*}
\tvt{\fmj}{\f} \leq \vert \ujz[j+1] - \ujz \vert,
\end{equation*}
which, combined with \eqref{eq:locestVTt}, implies \eqref{eq:tvt_f}. Since \(\vert \un\vert \le\vert \fpn\vert +\vert \fmn\vert \) and \(\vert \vn\vert \le\lambda(\vert \fpn\vert +\vert \fmn\vert )\), we also obtain \eqref{eq:tvt_u}.
\end{proof}

\subsection{Estimation of the equilibrium gap}

We aim to prove that, at the discrete level, the equilibrium gap \(\phi(\incun)-\incvn\) tends to \(0\), as the discretization step tends to \(0\). The purpose of the next proposition is then to estimate the quantity \(\phi(\ujn)-\vjn\).

\begin{proposition}[estimation of the discrete equilibrium gap]\label{th:prop_eq_gap}
 Under assumptions \ref{hyp:s}, \ref{hyp:CFL} and \ref{hyp:initeq}, the equilibrium gap is bounded, that is
 \begin{equation}\label{eq:lun_v}
 \normeun{\phi(\un)-\vn} \leq \frac{2\lambda\dx}{s} \tv_\R(\incu^0),
 \qquad n\in\N.
 \end{equation}
\end{proposition}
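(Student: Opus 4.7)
The plan is to prove the bound by induction on $n$ via the one-step estimate
\[
\normeun{\phi(\un[n+1]) - \vn[n+1]} \leq (1-s)\normeun{\phi(\un)-\vn} + 2\lambda\dx\,\tv(\incu^0),
\]
followed by summation of the resulting geometric series. The base case $n=0$ is immediate: by assumption~\ref{hyp:initeq}, $\vjz = \phi(\ujz)$ for every $j\in\Z$, so $\phi(\un[0])-\vn[0]\equiv 0$ and the bound holds with equality.

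For the inductive step, I set $w_j^n := \phi(\ujn) - \vjn$. The relaxation step~\eqref{eq:sc_uv_relstep} rewrites as $\vjnpd = \phi(\ujn) - (1-s)w_j^n$. Inserting this into the transport update of $\vjnp$ in~\eqref{eq:sc_one_step_uv} and forming $w_j^{n+1} = \phi(\ujnp)-\vjnp$ produces the clean recursion
\[
w_j^{n+1} = A_j^n + \tfrac{1-s}{2}(w_{j+1}^n + w_{j-1}^n),
\]
where
\[
A_j^n := \phi(\ujnp) - \tfrac{1}{2}\bigl(\phi(\ujn[j+1])+\phi(\ujn[j-1])\bigr) + \tfrac{\lambda}{2}(\ujn[j+1]-\ujn[j-1]).
\]
The term $A_j^n$ is the value that $w_j^{n+1}$ would take had the current state been at equilibrium ($w^n\equiv 0$); it plays the role of a Lax--Friedrichs flux imbalance. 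Taking absolute values, summing over $j\in\Z$, and using $|w_{j+1}^n+w_{j-1}^n|\leq|w_{j+1}^n|+|w_{j-1}^n|$ yields
\[
\sum_{j\in\Z}|w_j^{n+1}| \leq (1-s)\sum_{j\in\Z}|w_j^n|+\sum_{j\in\Z}|A_j^n|.
\]

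The key technical step is the bound $\sum_{j\in\Z}|A_j^n|\leq 2\lambda\tv(\incu^0)$. I decompose $A_j^n$ by the Taylor expansions $\phi(\ujnp)-\phi(\ujn[j\pm1])=\phi'(\xi_j^\pm)(\ujnp-\ujn[j\pm1])$, which rewrite the imbalance as
\[
A_j^n = \tfrac{\phi'(\xi_j^+)-\lambda}{2}(\ujnp-\ujn[j+1]) + \tfrac{\phi'(\xi_j^-)+\lambda}{2}(\ujnp-\ujn[j-1]).
\]
Under the sub-characteristic condition $|\phi'|\leq M\leq\lambda$ of assumption~\ref{hyp:CFL}, the two prefactors have known signs and are bounded in absolute value by $\lambda$. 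Expanding the differences $\ujnp-\ujn[j\pm1]$ via the first line of~\eqref{eq:sc_one_step_uv} introduces $(\ujn[j+1]-\ujn[j-1])$ and $(\vjnpd[j+1]-\vjnpd[j-1])$, and summing over $j$ together with Proposition~\ref{th:propVT} ($\tv(\un)\leq\tv(\incu^0)$ and $\tv(\vn)\leq\lambda\tv(\incu^0)$) and the sublinearity bound $\tv(\vnpd)\leq(1-s)\tv(\vn)+s\,\tv(\phi(\un))\leq\lambda\tv(\incu^0)$ produces the desired estimate. Multiplying by $\dx$ gives the one-step inequality; iterating from $w^0\equiv 0$ and summing $\sum_{k=0}^{n-1}(1-s)^k\leq 1/s$ concludes the proof.

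The main obstacle is establishing the sharp constant $2\lambda$ in the bound on $\sum_{j}|A_j^n|$: a naive triangle inequality applied separately to each Taylor remainder yields a strictly larger constant, so the argument must carefully pair the $(\ujn[j+1]-\ujn[j-1])$ and $(\vjnpd[j+1]-\vjnpd[j-1])$ contributions with their respective prefactors, and exploit $M\leq\lambda$ at the right stage of the estimate.
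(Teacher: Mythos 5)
Your overall architecture is exactly that of the paper: the bound follows from the one-step recursion $\normeun{\phi(\un[n+1])-\vn[n+1]}\le(1-s)\normeun{\phi(\un)-\vn}+2\lambda\dx\,\tv(\incu^0)$, the base case $\phi(\un[0])-\vn[0]=0$ from assumption~\ref{hyp:initeq}, and the geometric series $\sum_k(1-s)^k\le 1/s$. Your recursion $w_j^{n+1}=A_j^n+\tfrac{1-s}{2}(w_{j+1}^n+w_{j-1}^n)$ and the Taylor decomposition of $A_j^n$ with prefactors $\tfrac12\bigl(\phi^\prime(\xi_j^{\pm})\mp\lambda\bigr)$, each of modulus at most $\lambda$ under assumption~\ref{hyp:CFL}, are correct and equivalent to the paper's computation.

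However, the one genuinely nontrivial step --- $\sum_j\vert A_j^n\vert\le 2\lambda\,\tv(\incu^0)$ --- is not actually carried out: you name it as ``the main obstacle'', and the route you sketch for it does not deliver the constant. Expanding $\ujnp-\ujn[j\pm1]$ through the first line of \eqref{eq:sc_one_step_uv} and then bounding the total variations of $(\ujn)_{j\in\Z}$ and $(\vjnpd)_{j\in\Z}$ separately yields $\sum_j\vert A_j^n\vert\le 4\lambda\,\tv(\incu^0)$, hence a final bound of $4\lambda\dx/s$, twice the claimed one. The missing observation is that the two differences recombine exactly into increments of the post-relaxation characteristic variables:
\[
\ujnp-\ujn[j+1]=\fpjnpd[j-1]-\fpjnpd[j+1],
\qquad
\ujnp-\ujn[j-1]=\fmjnpd[j+1]-\fmjnpd[j-1],
\]
so that
\[
\sum_{j\in\Z}\vert A_j^n\vert\le \lambda\bigl(2\,\tv(\fpnpd)+2\,\tv(\fmnpd)\bigr)\le 2\lambda\,\tv(\un[0])\le 2\lambda\,\tv(\incu^0),
\]
where the middle inequality is the \emph{sum} bound $\tv(\fpnpd)+\tv(\fmnpd)\le\tv(\un[0])$ established in the proof of Proposition~\ref{th:propVT} (a consequence of $\hpprime+\hmprime=1$). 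Bounding each of $\tv(\fpnpd)$ and $\tv(\fmnpd)$ by $\tv(\incu^0)$ individually --- which is what the $u$/$v$ splitting amounts to --- is precisely where the spurious factor $2$ enters. With this identification your argument closes and coincides with the paper's proof, which works directly with the increments $\fpjnpd[j-1]-\fpjnpd$ and $\fmjnpd[j+1]-\fmjnpd$ from the outset.
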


\begin{proof}
We have to estimate
\begin{equation*}
\normeun{\phi(\un)-\vn} 
= \dx\sum_{j\in\Z} \vert \phi(\ujn)-\vjn\vert .
\end{equation*}
For \(n=0\), we have  
\(\normeun{\phi(\un[0])-\vn[0]} = 0\). 
Then, we use the relations
\[\phi(\ujnp)-\phi(\ujn) = \phi^\prime(\zeta_j^{n+\f})(\ujnp-\ujn)\]
with
\(\zeta_j^{n+\f}\in(\alpha,\beta)\) and
\(\vjnpd=(1-s)\vjn + s \phi(\ujn)\). We have
\begin{multline*}
 \phi(\ujnp)-\vjnp = \phi(\ujnp)-\phi(\ujn)+\phi(\ujn)-\vjnpd+\vjnpd-\vjnp\\
 \shoveleft{\qquad = (1-s) \bigl(\phi(\ujn) - \vjn\bigr) + \phi^\prime(\zeta_j^{n+\f})(\ujnp\!\!-\ujn)
  - (\vjnp \!\!- \vjnpd)}\\
 \shoveleft{\qquad = 
 (1-s) \bigl(\phi(\ujn) - \vjn\bigr)
 + \phi^\prime(\zeta_j^{n+\f})\bigl(\fpjnpd[j-1]+\fmjnpd[j+1] -\fpjnpd-\fmjnpd \bigr)}\\
\shoveright{ - \lambda \bigl( \fpjnpd[j-1] - \fmjnpd[j+1] - \fpjnpd + \fmjnpd \bigr)}\\
 \shoveleft{\qquad = 
 (1-s) \bigl(\phi(\ujn) - \vjn\bigr)
 + \bigl(\phi^\prime(\zeta_j^{n+\f})-\lambda\bigr)
   \bigl(\fpjnpd[j-1]-\fpjnpd \bigr)}\\
 + \bigl(\phi^\prime(\zeta_j^{n+\f})+\lambda\bigr)
   \bigl(\fmjnpd[j+1] -\fmjnpd \bigr).
 \end{multline*}
 By assumptions~\ref{hyp:s} and~\ref{hyp:CFL}, we obtain
 \begin{equation*}
\vert \phi(\ujnp)-\vjnp\vert\leq \vert 1-s\vert\,\vert\phi(\ujn) - \vjn\vert+2\lambda\left(\vert \fpjnpd[j-1]-\fpjnpd\vert + \vert \fmjnpd[j+1] -\fmjnpd\vert \right).
 \end{equation*} 
As a consequence of proposition \ref{th:propVT}, summing the above inequalities over \(j\in\Z\) yields
\begin{equation*}
\normeun{\phi(\un[n+1])-\vn[n+1]}
\leq
\vert  1-s\vert  \normeun{\phi(\un)-\vn}
+ 2\lambda \dx \tv(\un[0])
\end{equation*}
as
\(\tv(\fpnpd) + \tv(\fmnpd)\leq\tv(\un[0])\).
A recursive reasoning then implies that
\begin{multline*}
\normeun{\phi(\un)-\vn}
\leq 2\lambda \dx \tv(\un[0])
\sum_{k=0}^n \vert  1-s\vert ^k \\
\leq \frac{2\lambda\dx}{s} \tv(\un[0])
\leq \frac{2\lambda\dx}{s} \tv_\R(\incu^0),
\qquad n\in\N,
\end{multline*}
as \(s\in(0,1]\) by assumption~\ref{hyp:s} and using lemma~\ref{lem_VTu0}.
\end{proof}

\section{Convergence of the numerical scheme}
\label{sec:conv_scheme}

This section is devoted to proving the convergence of the numerical solution towards a weak solution of the nonlinear Cauchy problem (\refeq{eq:s1_edp}-\refeq{eq:s1_ic}). This result is obtained as a consequence of the spatial and temporal estimates proved in section \ref{sec:TVLinf_est}.
The compactness of the numerical sequences is used in the space of the functions with bounded variation and is obtained as a consequence of  Helly's theorem.

As usual in the context of finite volume schemes, we seek an approximated solution of the form
\begin{align}
\udelta(\vart, \varx) = \sum_{j\in\Z}\sum_{n\in\N}
  \ujn \ \indicatrice_{[\tn,\tn[n+1])} \ \indicatrice_{[\xj,\xj[j+1])},\\
\vdelta(\vart, \varx) = \sum_{j\in\Z}\sum_{n\in\N}
  \vjn \ \indicatrice_{[\tn,\tn[n+1])} \ \indicatrice_{[\xj,\xj[j+1])}.
\end{align}
We begin by proving the convergence of the sequence \((\udelta,\vdelta)\), as the space-meshing \(\dx\) and the time-step \(\dt\) tend to 0, towards a function \((\ulim,\vlim)\), where \(\ulim\) is a weak solution of the Cauchy problem (\refeq{eq:s1_edp}-\refeq{eq:s1_ic}) and \(\vlim=\phi(\ulim)\).
The main tools of the proof are classical in the context of finite volume schemes \cite{GodRav:1991:0}, even if the treatment of the source term is specific to the \duqd scheme. We propose the proof in appendix~\ref{ap_proof_conv}.

\begin{theorem}[convergence towards a weak solution]
\label{th:theo_conv}
Under assumptions \ref{hyp:s}, \ref{hyp:CFL} and \ref{hyp:initeq}, 
there exist sub-sequences of \(\udelta\), \(\vdelta\) (still denoted by \(\udelta\), \(\vdelta\)) and functions \(\ulim\), \(\vlim\) with
\begin{equation*}
\ulim, \vlim
\in
\Linf(\R^+{\times}\R) \cap \BV([0,T]{\times}\R),
\end{equation*}
for all \(T>0\), such that 
\begin{equation*}
(\udelta,\vdelta) \xrightarrow[\dt,\dx\to0]{} (\ulim,\vlim)
\quad
\Lunloc([0,+\infty[\times\R)\times \Lunloc([0,+\infty[\times\R).
\end{equation*}

We have in addition that \(\ulim\) is a weak solution of (\refeq{eq:s1_edp}-\refeq{eq:s1_ic}) in \([0,+\infty[\times\R\), and that \(\vlim=\phi(\ulim)\).
\end{theorem}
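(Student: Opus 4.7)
The plan is to combine the uniform \(\Linf\) bound from Proposition~\ref{th:proplinfty} with the spatial and temporal total variation estimates from Propositions~\ref{th:propVT}-\ref{th:propVTt} to establish compactness of the sequences \((\udelta,\vdelta)\) via Helly's theorem, then to identify the limits using the equilibrium gap estimate of Proposition~\ref{th:prop_eq_gap}, and finally to pass to the limit in the discrete conservation equation~\eqref{eq:sc_one_step_con}.

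First, for fixed \(T>0\) and \(R>0\), Proposition~\ref{th:proplinfty} gives uniform \(\Linf\) bounds on \(\udelta\) and \(\vdelta\). The spatial estimate~\eqref{eq:tv_u} provides a time-uniform control \(\tv_x(\udelta(\vart,\cdot))\leq\tv(\incu^0)\), and integration over \([0,T]\) bounds the spatial contribution to the total variation of \(\udelta\) on \([0,T]\times(-R,R)\). The temporal estimate~\eqref{eq:tvt_u}, combined with the CFL relation \(\dx/\dt=\lambda\), yields a time-BV contribution of order \(\lambda T\,\tv(\incu^0)\). Analogous bounds hold for \(\vdelta\) up to a factor \(\lambda\). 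Hence \((\udelta,\vdelta)\) is bounded in \(\Lun\cap\BV([0,T]\times(-R,R))\) uniformly in \(\dx\).

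Second, applying Helly's theorem on \([0,T]\times(-R,R)\) extracts a subsequence such that \(\udelta\to\ulim\) and \(\vdelta\to\vlim\) in \(\Lun\) and almost everywhere, the limits lying in \(\Linf\cap\BV\). A diagonal extraction over \(R,T\to\infty\) yields the stated \(\Lunloc\) convergence on \([0,+\infty)\times\R\). To identify \(\vlim=\phi(\ulim)\), I would integrate the equilibrium gap estimate~\eqref{eq:lun_v} in time to get
\[
\int_0^T \normeun{\phi(\un)-\vn}\,\ddd\vart \leq \frac{2\lambda T\dx}{s}\,\tv(\incu^0)\xrightarrow[\dx\to 0]{}0,
\]
so that \(\phi(\udelta)-\vdelta\to 0\) in \(\Lunloc\); combined with the almost everywhere convergence of \(\udelta\), the continuity of \(\phi\) and the \(\Linf\) bound, dominated convergence gives \(\phi(\udelta)\to\phi(\ulim)\) in \(\Lunloc\), whence \(\vlim=\phi(\ulim)\).

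Third, to show \(\ulim\) is a weak solution, I would multiply~\eqref{eq:sc_one_step_con} by \(\varphi(\tn,\xj)\dx\dt\) for \(\varphi\in\mathcal{C}^\infty_c([0,+\infty)\times\R)\) and sum over \(n\in\N,\,j\in\Z\). Discrete summation by parts in time produces, in the limit, the terms \(-\iint \ulim\,\drondt\varphi\) and \(-\int_\R\incu^0(\varx)\varphi(0,\varx)\,\ddd\varx\) (the latter using assumption~\ref{hyp:initeq}); summation by parts in space applied to the centered difference of \(\vn\), together with \(\vdelta\to\phi(\ulim)\) in \(\Lunloc\), yields \(-\iint\phi(\ulim)\,\drondx\varphi\). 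The numerical diffusion term, after two summations by parts, carries a factor \(\lambda\dx\) and vanishes by the \(\Linf\) bound and the compact support of \(\varphi\). Finally, the right-hand side, after summation by parts in space, takes the form
\[
\frac{s\dx}{2\lambda}\,\dt\sum_n\sum_j\bigl(\phi(\ujn)-\vjn\bigr)\bigl(\varphi(\tn,\xj[j+1])-\varphi(\tn,\xj[j-1])\bigr)/\dt,
\]
whose absolute value is bounded, via \(|\varphi(\tn,\xj[j+1])-\varphi(\tn,\xj[j-1])|\leq 2\dx\|\drondx\varphi\|_\infty\) and~\eqref{eq:lun_v}, by \(2T\lambda\dx\|\drondx\varphi\|_\infty\tv(\incu^0)\), which tends to \(0\) as \(\dx\to 0\).

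The main obstacle will be the careful accounting of the right-hand side of~\eqref{eq:sc_one_step_con}: a formal factor \(1/\dt\) appears, but discrete summation by parts in space contributes a \(\dx\), estimate~\eqref{eq:lun_v} supplies another \(\dx\), and accumulation over \(T/\dt\) time steps leaves a net bound of order \(\lambda\dx\) that vanishes. A subsidiary point is to ensure that a single subsequence realises the convergence of both \(\udelta\) and \(\vdelta\) (achieved by the diagonal extraction) and that the limit \(\ulim\) inherits the \(\BV\) regularity on every space-time cylinder, which follows from the lower semi-continuity of the total variation under \(\Lun\) convergence.
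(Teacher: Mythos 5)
Your proposal is correct and follows essentially the same route as the paper: uniform $\Linf$ bounds plus the spatial and temporal total variation estimates give $\BV$ compactness via Helly's theorem, the equilibrium gap estimate identifies $\vlim=\phi(\ulim)$, and a Lax--Wendroff-type passage to the limit in the discrete balance law \eqref{eq:sc_one_step_con} (with the right-hand side controlled by \eqref{eq:lun_v} after a discrete summation by parts) shows that $\ulim$ is a weak solution. Your bookkeeping of the factors $\dx$ on the relaxation term and on the numerical diffusion term matches the paper's, and your explicit diagonal extraction and dominated-convergence step for $\phi(\udelta)\to\phi(\ulim)$ are only minor refinements of the argument given there.
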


\section{Entropies and numerical entropy estimates}
\label{sec:num_ent}

In this section we aim to establish discrete entropy estimates for the numerical scheme \eqref{eq:sc_one_step_uv}. To do so, we will use the relaxation entropies introduced in \cite{Ser:2000:0} in order to construct numerical entropies for the scheme. We will also make a link between these relaxation entropies and a kinetic decomposition of the dual entropy for the nonlinear conservation law, introduced in \cite{Bou:2003:0,Dub:2013:0}. 

\subsection{Entropy}

Let us consider an entropy-entropy flux pair \((\ent,\entflux)\), with \(\ent\) strictly convex, for the hyperbolic scalar conservation law \eqref{eq:s1_edp}, 
\begin{equation*}
\ent'(\incu) \phi'(\incu)=\entflux'(\incu),\qquad \ent''(\incu)>0, \quad \incu\in[\alpha,\beta]. 
\end{equation*}
For given \(\lambda\), we introduce the homogeneous Jin and Xin relaxation system \cite{JinXin:1995:0}:
\begin{equation}
\label{eq:sys_JX}
  \left\lbrace
  \begin{aligned}
&\drondt \thincu +  \drondx \thincv = 0,\\
&\drondt \thincv + \lambda^2 \drondx \thincu = 0,
  \end{aligned}
  \right.
\end{equation}
which we can write in an equivalent way in the characteristic variables \((\thfp, \thfm)\):
\begin{equation}\label{eq:sys_JX_car}
  \left\lbrace
  \begin{aligned}
&\drondt \thfp + \lambda \drondx \thfp = 0,\\
&\drondt \thfm - \lambda \drondx \thfm = 0.
  \end{aligned}
  \right.
\end{equation}
We remark that \eqref{eq:sys_JX} is the homogeneous part of \eqref{eq:sys_JX_nh}, with $\Lambda=\lambda$, where $\lambda$ is the velocity scale in the \duqd scheme; we consider here this choice in order to motivate the construction of numerical entropies for our scheme.

Following Serre \cite{Ser:2000:0}, we define an entropy-entropy flux couple \((\entrelax, \entfluxrelax)\) for system~\eqref{eq:sys_JX} as follows. 
Let us first define the couple \((\kinentp,\kinentm)\) of the kinetic entropies by 
\begin{equation*}
 \kinent(\varcharf) = \frac{
   \lambda\ent\pm\entflux
 }{2\lambda} ((\hpm)^{-1}(\varcharf)),
 \qquad \varcharf\in[\hpm(\alpha),\hpm(\beta)],
\end{equation*}
and the couple \((\tilde{\ent},\tilde{\entflux})\) by
\begin{equation*}
(\tilde{\ent},\tilde{\entflux})(\fm,\fp)
=
\bigl(
  \kinentp(\fp)+\kinentm(\fm),
  \lambda \kinentp(\fp) - \lambda \kinentm(\fm)
\bigr),\qquad \fpm\in[\hpm(\alpha),\hpm(\beta)].
\end{equation*}
We remark then that the couple \((\tilde{\ent},\tilde{\entflux})\) is an entropy-entropy flux pair for system~\eqref{eq:sys_JX_car} that lies in the set \([\hp(\alpha),\hp(\beta)]\times[\hm(\alpha),\hm(\beta)]\).

Let us now define the couple \((\entrelax, \entfluxrelax)\) by
\begin{equation*}
\entrelax(\incu,\incv) =
  \kinentp\Bigl(\frac{\lambda \incu+\incv}{2\lambda}\Bigr)
  + \kinentm\Bigl(\frac{\lambda \incu-\incv}{2\lambda}\Bigr),
\quad
\entfluxrelax(\incu,\incv) =
  \lambda \kinentp\Bigl(\frac{\lambda \incu+\incv}{2\lambda}\Bigr)
  - \lambda \kinentm\Bigl(\frac{\lambda \incu-\incv}{2\lambda}\Bigr),
\end{equation*}
for \((\incu,\incv)\) such that 
\((\lambda \incu+\incv)/(2\lambda) \in[\hp(\alpha),\hp(\beta)]\),
\((\lambda \incu-\incv)/(2\lambda) \in[\hm(\alpha),\hm(\beta)]\).
Then, the couple \((\entrelax, \entfluxrelax)\) is an entropy-entropy flux pair for the system~\eqref{eq:sys_JX}, which satisfies
\begin{equation*}
(\entrelax, \entfluxrelax)(\incu,\phi(\incu))
= (\ent,\entflux)(\incu),\quad \forall \incu\in[\alpha,\beta].
\end{equation*}

Let us now state some properties of the entropies $\kinentp$ and $\kinentm$ that are useful to establish the entropy estimates for the numerical solutions.

Due to the assumption \eqref{hyp:CFL}, we can easily check the following lemma.

\begin{lemma}
\label{th:lm_kinent}
  Under assumption \eqref{hyp:CFL}, the functions \(\kinentp\) and \(\kinentm\) satisfy 
\begin{equation*}
 \kinentprime(\varcharf) = \ent^\prime((\hpm)^{-1}(\varcharf)),
 \quad
 \kinentsecond(\varcharf) > 0,
 \quad \varcharf\in[\hpm(\alpha), \hpm(\beta)].
\end{equation*}
Furthermore we have
\begin{equation*}
\kinentprime[+](\hp(\incu))=\kinentprime[-](\hm(\incu))=\ent^\prime(\incu), \quad \incu\in[\alpha,\beta].
\end{equation*}
\end{lemma}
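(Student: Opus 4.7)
The plan is to derive both identities directly from the definition of $\kinent$ using the chain rule, together with the two identities already at hand: the entropy--flux compatibility $\entflux' = \ent'\phi'$ and the explicit formula $\hpmprime(\xi) = (\lambda\pm\phi'(\xi))/(2\lambda)$. The CFL assumption is needed in order to ensure that $\hpm$ is strictly increasing on $[\alpha,\beta]$, so that the inverse $(\hpm)^{-1}$ appearing in the definition of $\kinent$ is well-defined and differentiable; I would record this remark first, since lemma~\ref{thm:hphm} already gives monotonicity.

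Writing $g_\pm := (\hpm)^{-1}$ for brevity, I differentiate
\begin{equation*}
\kinent(f) = \frac{\lambda\,\ent(g_\pm(f)) \pm \entflux(g_\pm(f))}{2\lambda}
\end{equation*}
by the chain rule to get
\begin{equation*}
\kinentprime(f) = \frac{\lambda\,\ent'(g_\pm(f)) \pm \entflux'(g_\pm(f))}{2\lambda}\, g_\pm'(f),
\qquad g_\pm'(f) = \frac{1}{\hpmprime(g_\pm(f))} = \frac{2\lambda}{\lambda\pm\phi'(g_\pm(f))}.
\end{equation*}
Substituting $\entflux'(g_\pm(f)) = \ent'(g_\pm(f))\,\phi'(g_\pm(f))$, the numerator factors as $\ent'(g_\pm(f))\,(\lambda\pm\phi'(g_\pm(f)))$, which cancels against the factor $\lambda\pm\phi'(g_\pm(f))$ in $g_\pm'$. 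This yields the compact formula $\kinentprime(f) = \ent'(g_\pm(f)) = \ent'((\hpm)^{-1}(f))$. Differentiating once more gives
\begin{equation*}
\kinentsecond(f) = \ent''(g_\pm(f))\, g_\pm'(f) = \frac{2\lambda\,\ent''(g_\pm(f))}{\lambda\pm\phi'(g_\pm(f))},
\end{equation*}
which is strictly positive on $[\hpm(\alpha),\hpm(\beta)]$ since $\ent'' > 0$ by strict convexity and $\lambda\pm\phi' > 0$ on $[\alpha,\beta]$ by assumption~\ref{hyp:CFL}.

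The final identity $\kinentprime[+](\hp(\incu)) = \kinentprime[-](\hm(\incu)) = \ent'(\incu)$ is then an immediate consequence of the formula $\kinentprime(f) = \ent'((\hpm)^{-1}(f))$ by plugging in $f = \hpm(\incu)$. I expect no serious obstacle here: the only point worth attention is confirming that the CFL condition guarantees the invertibility of $\hpm$ on $[\alpha,\beta]$ with strict positivity of $\hpmprime$, so that both $g_\pm'$ and $\kinentsecond$ are well-defined and positive; once this is dispatched the rest is mechanical chain-rule computation.
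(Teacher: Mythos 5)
Your proposal is correct and follows essentially the same route as the paper: the chain rule applied to the definition of \(\kinent\), combined with \(\entflux'=\ent'\phi'\) and \((\hpm)'=(\lambda\pm\phi')/(2\lambda)\), so that the factor \(\lambda\pm\phi'\) cancels against the derivative of the inverse. Your added care about the strict positivity of \(\hpmprime\) (needed for \((\hpm)^{-1}\) to be differentiable and for \(\kinentsecond>0\)) is a point the paper's proof also implicitly relies on, so there is nothing further to fix.
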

\begin{proof}
  The result of the lemma follows immediately from the definition of \(\kinent\) and from the relation 
  \(\ent^\prime\phi^\prime=\entflux^\prime\). 
  Since \({\hpm}'(\xi)=\frac{\lambda\pm\phi^\prime(\xi)}{2\lambda}\), for \(\xi\in\R\), we have
  \[
    \kinentprime(\varcharf)=\left(\ent^\prime\frac{\lambda\pm\phi^\prime}{2\lambda}\right)({(\hpm)}^{-1}(\varcharf))\,\frac{1}{({\hpm})^{\prime}({(\hpm)}^{-1}(\varcharf))}=
\ent^\prime((\hpm)^{-1}(\varcharf)).
  \]
\end{proof}

\subsection{Dual entropy}

In this paragraph we make a link between the relaxation entropies \(\kinentp, \kinentm\) defined previously with the kinetic decomposition of the dual entropy introduced by Bouchut in \cite{Bou:2003:0} and by one of the authors in \cite{Dub:2013:0}  for the nonlinear conservation law \eqref{eq:s1_edp}.

Let us introduce the entropy variable \(\entvar\) for the scalar conservation law:
\begin{equation*}
\entvar = \ent'(\incu). 
\end{equation*}
We define the dual entropy 
\(\entvar \longmapsto \ent^\dual (\entvar)\)
according to 
\begin{equation*}
\ent^\dual (\entvar) = \sup_{w} \bigl( \entvar w - \ent(w) \bigr).
\end{equation*}
We then have 
\begin{gather*}
\ent^\dual (\entvar) = \entvar (\ent')^{-1}(\entvar)  - \ent \big( (\ent')^{-1}(\entvar) \big) =  \entvar\incu - \ent(\incu),\\
\frac{\ddd\ent^\dual}{\ddd\entvar}(\entvar) = 
(\ent')^{-1}(\entvar) =  \incu.
\end{gather*}
The dual entropy flux \(\entvar \longmapsto \entflux^\dual (\entvar) \) is then defined according to 
\begin{equation*}
\entflux^\dual (\entvar) = 
\entvar \phi\bigl((\ent')^{-1}(\entvar)\bigr) - \entflux\bigl((\ent')^{-1}(\entvar)\bigr)
=  \entvar \phi(\incu) - \entflux(\incu).
\end{equation*}
We have the following differential
\begin{equation*}
\frac{\ddd \entflux^\dual}{\ddd\entvar} (\entvar) = 
\phi \bigl((\ent')^{-1}(\entvar)\bigr)
=  \phi(\incu).
\end{equation*}

Following \cite{Dub:2013:0}, let us introduce two convex functions 
\(\entvar \longmapsto \psipmdual(\entvar)\) 
that satisfy the kinetic decomposition of the dual entropy 
\begin{equation} 
\label{eq:fd-psis-star}
  \psipmdual[+](\entvar) +  \psipmdual[-](\entvar) 
  = \ent^\dual (\entvar), 
  \qquad
  \lambda \psipmdual[+](\entvar) - \lambda \psipmdual[-](\entvar) 
  = \entflux^\dual (\entvar),
\end{equation} 
and the equilibrium functions 
\(\incu \longmapsto \fpmeq (\incu) \) according to the relation 
\begin{equation*}
\fpmeq (\incu)  =  
\frac{\ddd\psipmdual}{\ddd\entvar}
\bigl(\ent'(\incu)\bigr)
= 
\frac{\ddd\psipmdual}{\ddd\entvar}(\entvar).
\end{equation*}

We can then define a kinetic hyperbolic system
\begin{equation}\label{eq:fd-kinetic-d1q2}
  \left\lbrace
  \begin{aligned}
&\drondt \fp + \lambda \drondx \fp = \frac{1}{\epsilon}
\bigl( \fpmeq[+](\fp+\fm) - \fp \big),\\
&\drondt \fm - \lambda \drondx \fm = \frac{1}{\epsilon}
\bigl( \fpmeq[-](\fp+\fm) - \fm \big).
  \end{aligned}
  \right.
\end{equation}

We introduce  the duals \(\varcharf \longmapsto \psipm(\varcharf) \) of the functions 
\(\psipmdual\) defined in \eqref{eq:fd-psis-star}:
\begin{equation*}
\psipm(\varcharf) = \sup_{\entvar} \bigl( \varcharf \entvar - \psipmdual(\entvar) \bigr). 
\end{equation*}
Then 
\begin{equation*}
\psipmprime(\varcharf) = 
\left( \frac{\ddd \psipmdual }{\ddd \entvar} \right)^{-1} (\varcharf) . 
\end{equation*}
We know that such a framework is able to put in evidence a ``H-theorem''  \cite{Dub:2013:0}.
Just multiply each equation of \eqref{eq:fd-kinetic-d1q2} by \(\psipmprime(\fpm)\); then 
\begin{equation*}
 \drondt \big( \psipm[+](\fp) + \psipm[-](\fm) \big) 
+ \drondx \big( \lambda \psipm[+](\fp) - \lambda \psipm[-](\fm) \big) 
 \leq  0. 
\end{equation*}

The natural question is to make a link between this framework and the tools introduced in this contribution, 
\textit{id est} to make a link between \(\hpm\) and \(\fpmeq\) and between \(\kinent\) and \(\psipm\). 
We have the following proposition.

\begin{proposition}[Duality and entropy decomposition]
\label{th:prop-dualite-entropie}
We have 
\begin{align*} 
\fpmeq(\incu) &=  \hpm(\incu), & \incu&\in[\alpha,\beta],\\
\psipm(\varcharf) &=  \kinent(\varcharf), 
&\varcharf&\in[\hpm(\alpha), \hpm(\beta)]. 
\end{align*}    
\end{proposition}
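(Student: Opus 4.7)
The plan is to exploit the linear system \eqref{eq:fd-psis-star} defining $\psipmdual$ and then translate the results back through the Legendre transform.

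First I would solve the $2\times 2$ system \eqref{eq:fd-psis-star} for $\psipmdual$ in terms of the dual entropy pair, getting the closed formula
\begin{equation*}
\psipmdual(\entvar) = \frac{\lambda\,\ent^\dual(\entvar) \pm \entflux^\dual(\entvar)}{2\lambda}.
\end{equation*}
Differentiating with respect to $\entvar$ and using the identities $\tfrac{\ddd\ent^\dual}{\ddd\entvar}(\entvar)=\incu$ and $\tfrac{\ddd\entflux^\dual}{\ddd\entvar}(\entvar)=\phi(\incu)$ recalled earlier (with $\incu=(\ent')^{-1}(\entvar)$), one immediately obtains
\begin{equation*}
\fpmeq(\incu) = \frac{\ddd\psipmdual}{\ddd\entvar}(\ent'(\incu)) = \frac{\lambda\incu \pm \phi(\incu)}{2\lambda} = \hpm(\incu),
\end{equation*}
which is the first identity. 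This step is essentially immediate.

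For the second identity, I would use Legendre duality. Since $\psipmdual$ is strictly convex (being proportional to the convex dual entropy up to a lower-order correction; this can be checked from the assumption $\ent''>0$ together with the sub-characteristic condition \ref{hyp:CFL}), the supremum defining $\psipm(f)$ is attained at the unique $\entvar$ such that $f = \tfrac{\ddd\psipmdual}{\ddd\entvar}(\entvar) = \hpm(\incu)$, that is, $\incu=(\hpm)^{-1}(f)$ and $\entvar = \ent'(\incu)$. Substituting into $\psipm(f) = f\,\entvar - \psipmdual(\entvar)$ and expanding $\ent^\dual = \entvar\incu-\ent(\incu)$, $\entflux^\dual = \entvar\phi(\incu)-\entflux(\incu)$ gives
\begin{equation*}
\psipm(f) = \hpm(\incu)\,\ent'(\incu) - \frac{\ent'(\incu)\bigl(\lambda\incu\pm\phi(\incu)\bigr)}{2\lambda} + \frac{\lambda\,\ent(\incu)\pm\entflux(\incu)}{2\lambda}.
\end{equation*}
The first two terms cancel by the definition of $\hpm$, leaving precisely $\tfrac{\lambda\ent\pm\entflux}{2\lambda}((\hpm)^{-1}(f)) = \kinent(f)$.

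The only slightly delicate point is justifying that the Legendre transform is attained at the expected critical point, which requires knowing that $\psipmdual$ is strictly convex; this follows from Lemma~\ref{th:lm_kinent} applied in the dual direction (equivalently, from $\hpm$ being strictly increasing on $[\alpha,\beta]$ under assumption~\ref{hyp:CFL} together with $\ent''>0$). Once this is in place, both identities reduce to the straightforward algebraic manipulations above.
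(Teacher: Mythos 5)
Your proposal is correct and follows essentially the same route as the paper's own proof: solve the linear system \eqref{eq:fd-psis-star} to get $\psipmdual=(\lambda\ent^\dual\pm\entflux^\dual)/(2\lambda)$, differentiate to identify $\fpmeq=\hpm$, then evaluate the Legendre transform at the critical point $\entvar=\ent'(\incu)$ with $f=\hpm(\incu)$ and watch the terms cancel. Your extra remark on justifying attainment of the supremum via convexity of $\psipmdual$ (from $\ent''>0$ and assumption~\ref{hyp:CFL}) is a point the paper passes over silently, and is a welcome addition.
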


\begin{proof}
We first have
\begin{equation*}
 \psipmdual(\entvar) = \frac{
   \lambda\ent^\dual(\entvar)\pm\entflux^\dual(\entvar)
 }{2\lambda},
 \qquad
 \frac{\ddd\psipmdual}{\ddd\entvar}(\entvar) = \frac{
   \lambda\incu\pm\phi(\incu)
 }{2\lambda} = \hpm(\incu)
 \quad \text{with }\entvar=\ent'(\incu).
\end{equation*}
Then, we can identify \(\fpmeq=\hpm\).

At the optimum value \(\entvar\) that define \(\psipm(\varcharf)\), we have 
\(\varcharf = \big( \psipmdual \big)' (\entvar)  = \hpm (\incu) \) with \(\ent'(\incu) = \entvar\). 
Then 
\begin{align*}
 \psipm(\varcharf) 
 & = \varcharf\entvar - \frac12 \Bigl(
   \ent^\dual(\entvar)\pm\frac{1}{\lambda} \entflux^\dual(\entvar)
 \Bigr) 
 = \varcharf\entvar - \frac12 \Bigl(
   \entvar\incu - \ent(\incu)
   \pm \frac{1}{\lambda} \bigl(\entvar\phi(\incu) - \entflux(\incu)\bigr)
 \Bigr) \\
 &= \entvar \Bigl(
  \varcharf - \frac12 \Bigl(\incu \pm \frac{1}{\lambda}\phi(\incu) \Bigr)
 \Bigr) + \frac12 \Bigl(
 \ent(\incu) \pm \frac{1}{\lambda}\entflux(\incu)
 \Bigr) \\
 &= \entvar (\varcharf-\hpm(\incu)) + \frac12 \Bigl(
 \ent\pm\frac1\lambda\entflux
 \Bigr) \Bigl(
 (\hpm)^{-1}(\varcharf)
 \Bigr) = \kinent(\varcharf),
\end{align*}
and the proof is established. 
\end{proof}

\subsection{Numerical entropies estimates}

In this section, we construct a numerical entropy and the corresponding numerical entropy-flux for the numerical scheme and we prove the dissipation of this numerical entropy.
Inspired by \cite{LatSer:2001:0}, let us define the numerical entropies for the numerical scheme by 
\begin{align}\label{defnument}
\Ejnpd &= \kinentp(\fpjnpd) + \kinentm(\fmjnpd),
& j\in\Z, \ n\in\N,\\
\Qjnpd &= \lambda \kinentp(\fpjnpd) - \lambda \kinentm(\fmjnpd[j+1]),
& j\in\Z, \ n\in\N,
\end{align}
where $\lambda=\dx/\dt$.
Note that the numerical entropies are defined for each time step after the relaxation phase and before the transport phase. This is essential in order to obtained the estimates.
Let us now define the numerical entropy production
\begin{equation}\label{eq:diss_num_ent}
\entprodjn = \frac{\Ejnpd-\Ejnmd}{\dt} + \frac{\Qjnmd-\Qjnmd[j-\onehalf]}{\dx},
\qquad j\in\Z, \ n\in\N^*.
\end{equation}
We begin by proving that the entropy production has a sign.

\begin{proposition}
\label{thm:prop_eq:diss_num_ent}
Under assumptions \ref{hyp:s} and \ref{hyp:CFL}, we have
\begin{equation*}
\entprodjn\leq 0,\qquad j\in\Z, \ n\in\N^*.
\end{equation*}
\end{proposition}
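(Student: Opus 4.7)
The plan is to isolate the two successive stages of the scheme by introducing the intermediate ``post-transport'' entropy
\[
\widetilde{E}_j^{n} := \kinentp(\fpjn) + \kinentm(\fmjn), \qquad j\in\Z,\ n\in\N^*,
\]
and decomposing
\[
\dt\,\entprodjn
= \bigl(\Ejnpd - \widetilde{E}_j^{n}\bigr) + \Bigl(\widetilde{E}_j^{n} - \Ejnmd + \tfrac{\dt}{\dx}\bigl(\Qjnmd-\Qjnmd[j-\onehalf]\bigr)\Bigr).
\]
I would first show that the bracketed ``transport'' contribution vanishes identically. The exact-characteristic transport~\eqref{eqq:sc_f_trstep} gives $\fpjn=\fpjnmd[j-1]$ and $\fmjn=\fmjnmd[j+1]$, whence
\[
\widetilde{E}_j^{n} - \Ejnmd = \bigl[\kinentp(\fpjnmd[j-1])-\kinentp(\fpjnmd)\bigr] + \bigl[\kinentm(\fmjnmd[j+1])-\kinentm(\fmjnmd)\bigr],
\]
and expanding $\Qjnmd - \Qjnmd[j-\onehalf]$ from the definition and using the CFL identity $\dt/\dx=1/\lambda$ produces exactly the opposite quantity. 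This cancellation is by design: the upwind indexing of $\Qjnpd$ (namely $\fpjnpd$ paired with $\fmjnpd[j+1]$) mimics the exact characteristics of system~\eqref{eq:sys_JX_car}.

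Next I would bound the ``relaxation'' term $\Ejnpd-\widetilde{E}_j^{n}$. By~\eqref{eq:rel_f_comb_conv}, $\fpjnpd$ is a convex combination of $\fpjn$ and $\hp(\ujn)$, and analogously for $\fmjnpd$, since $1-s\ge 0$ by assumption~\ref{hyp:s}. Lemma~\ref{th:lm_kinent} gives the strict convexity of $\kinentp$ and $\kinentm$ on $[\hp(\alpha),\hp(\beta)]$ and $[\hm(\alpha),\hm(\beta)]$, and proposition~\ref{th:proplinfty} keeps all iterates in these intervals. Convexity then yields
\begin{align*}
\kinentp(\fpjnpd) &\le (1-s)\,\kinentp(\fpjn) + s\,\kinentp(\hp(\ujn)),\\
\kinentm(\fmjnpd) &\le (1-s)\,\kinentm(\fmjn) + s\,\kinentm(\hm(\ujn)),
\end{align*}
and summing together with the identity $\kinentp(\hp(\incu)) + \kinentm(\hm(\incu)) = \ent(\incu)$, immediate from the definition of the kinetic entropies, gives
\[
\Ejnpd - \widetilde{E}_j^{n} \le s\bigl(\ent(\ujn) - \widetilde{E}_j^{n}\bigr).
\]

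The key remaining step is the kinetic sub-characteristic entropy inequality $\ent(\ujn) \le \widetilde{E}_j^{n}$. I would fix $\incu=\ujn$ and consider the one-variable map
\[
g(\incv) := \kinentp\!\left(\tfrac{\lambda\incu+\incv}{2\lambda}\right) + \kinentm\!\left(\tfrac{\lambda\incu-\incv}{2\lambda}\right),
\]
for which $g(\vjn)=\widetilde{E}_j^{n}$ while $g(\phi(\incu))=\ent(\incu)$. Lemma~\ref{th:lm_kinent} yields $\kinentprime[+](\hp(\incu))=\kinentprime[-](\hm(\incu))=\ent'(\incu)$, hence $g'(\phi(\incu))=0$, and the positivity of $\kinentsecond$ from the same lemma makes $g$ strictly convex. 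Its minimum is therefore attained at $\incv=\phi(\incu)$ with value $\ent(\incu)$, so evaluating at $\incv=\vjn$ proves the inequality. Combined with the previous estimate this gives $\Ejnpd-\widetilde{E}_j^{n}\le 0$, and the claim $\entprodjn\le 0$ follows.

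The main conceptual obstacle is this last sub-characteristic inequality. The transport step is an exact exchange of kinetic entropy and entropy flux (it holds with equality), and the convex-combination bound in the relaxation step is routine; the crux is recognizing that the ``lifted'' entropy $\kinentp(\fp)+\kinentm(\fm)$ is minimized over the admissible $\incv$ exactly at the equilibrium $\incv=\phi(\incu)$, which is precisely where assumption~\ref{hyp:CFL} is genuinely used, through the positivity of $\kinentsecond$ on $[\hpm(\alpha),\hpm(\beta)]$.
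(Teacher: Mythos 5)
Your proof is correct, and it takes a genuinely different route from the paper's. The paper does not separate the two phases: using $\lambda=\dx/\dt$ it first collapses $\entprodjn$ into $\frac{1}{\dt}\bigl(\kinentp(\fpjnpd)-\kinentp(\fpjnmd[j-1])\bigr)+\frac{1}{\dt}\bigl(\kinentm(\fmjnpd)-\kinentm(\fmjnmd[j+1])\bigr)$, applies the convexity inequality $\kinent(b)\geq\kinent(a)+\kinentprime(a)(b-a)$ with $a=\fpmjnpd[j]$ and $b=\fpmjnmd[j\mp1]$, and then Taylor-expands $\kinentprime(\fpmjnpd[j])$ around the equilibrium value $\hpm(\ujnpd[j])$ to reach the explicit bound $\entprodjn\leq-\frac{s}{4\lambda^2(1-s)}\bigl(\phi(\ujnpd[j])-\vjnpd[j]\bigr)^2\bigl(\kinentsecond[\plus](\xi^{\plus, n+\onehalf}_j)+\kinentsecond[\moins](\xi^{\moins, n+\onehalf}_j)\bigr)\leq0$. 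You instead split off the transport contribution, which cancels exactly because of the upwind pairing in $\Qjnpd$ (your verification of that cancellation is right), and control the relaxation contribution by Jensen's inequality for the convex combination \eqref{eq:rel_f_comb_conv} together with the Gibbs-type minimization principle: $\ent(\incu)$ is the minimum over $\incv$ of $\kinentp\bigl(\tfrac{\lambda\incu+\incv}{2\lambda}\bigr)+\kinentm\bigl(\tfrac{\lambda\incu-\incv}{2\lambda}\bigr)$, attained at $\incv=\phi(\incu)$ where the derivative in $\incv$ vanishes by Lemma~\ref{th:lm_kinent} and the second derivative is positive. Both proofs use assumption~\ref{hyp:s} exactly for the convex combination and assumption~\ref{hyp:CFL} exactly for the convexity of $\kinent$; your decomposition avoids the factor $1/(1-s)$ that formally degenerates at $s=1$ in the paper's intermediate steps (harmless there, since the post-relaxation equilibrium gap carries a factor $1-s$) and makes the mechanism of dissipation structurally transparent, while the paper's computation buys in exchange a quantitative dissipation rate---proportional to the squared equilibrium gap---rather than only the sign of $\entprodjn$.
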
 

\begin{proof}
We develop \(\entprodjn\). Since we have \(\lambda=\dx/\dt\), we obtain
\begin{equation*}
\entprodjn = \frac{\kinentp(\fpjnpd)-\kinentp(\fpjnmd[j-1])}{\dt}
  + \frac{\kinentm(\fmjnpd)-\kinentm(\fmjnmd[j+1])}{\dt}.
\end{equation*}

By using one time step of the scheme, we have 
\begin{equation*}
\fpmjnpd[j] = 
\fpmjnmd[j\mp1]
  \pm \frac{s}{2\lambda}\bigl(\phi(\ujnmd[j])-\vjnmd[j]\bigr)
=
\fpmjnmd[j\mp1]
  \pm \frac{s}{2\lambda(1-s)}\bigl(\phi(\ujnpd[j])-\vjnpd[j]\bigr),
\end{equation*}
the convexity of \(\kinent\) implies that
\begin{equation*}
\kinent(\fpmjnpd[j]) \mp \frac{s}{2\lambda(1-s)}
\bigl(\phi(\ujnpd[j])-\vjnpd[j]\bigr)\kinentprime(\fpmjnpd[j])
\leq \kinent(\fpmjnmd[j\mp1]).
\end{equation*}
We have thus that
\begin{equation*}
\frac{\kinent(\fpmjnpd[j])-\kinent(\fpmjnmd[j\mp1])}{\dt}
\leq \pm \frac{s}{2\lambda(1-s)}
\bigl(\phi(\ujnpd[j])-\vjnpd[j]\bigr)\kinentprime(\fpmjnpd[j]).
\end{equation*}
Hence we get
\begin{equation}\label{eq:eq_est_ent}
\entprodjn \leq
 \frac{s}{2\lambda(1-s)}
 \bigl(\phi(\ujnpd[j])-\vjnpd[j]\bigr)
 \bigl(\kinentpprime(\fpjnpd[j])-\kinentmprime(\fmjnpd[j]) \bigr).
\end{equation}

Now from 
\begin{equation*}
\fpmjnpd[j] = \hpm(\ujnpd[j])
\pm \frac{1}{2\lambda} \bigl(\vjnpd[j]-\phi(\ujnpd[j])\bigr)
\end{equation*}
it follows that 
\begin{equation*}
\kinentprime(\fpmjnpd[j]) = 
\kinentprime(\hpm(\ujnpd[j]))
\pm \frac{1}{2\lambda} \bigl(\vjnpd[j]-\phi(\ujnpd[j])\bigr)
\kinentsecond(\xi^{\pm, n+\onehalf}_j),
\end{equation*}
where \(\xi^{\pm, n+\onehalf}_j\) lie between \(\fpmjnpd[j]\) and \(\hpm(\ujnpd[j])\). Combining with \eqref{eq:eq_est_ent} and using Lemma~\ref{th:lm_kinent}, we conclude that
\begin{equation*}
\entprodjn \leq -\frac{s}{4\lambda^2(1-s)}
\bigl(\phi(\ujnpd[j])-\vjnpd[j]\bigr)^2
\bigl(
\kinentsecond[\plus](\xi^{\plus, n+\onehalf}_j)
+
\kinentsecond[\moins](\xi^{\moins, n+\onehalf}_j)
\bigr) \leq 0,
\end{equation*}
and the proof is established.
\end{proof}

\section{Convergence towards the entropic solution}
\label{sec:conv_entropysol}

In this section we establish the final convergence result, by using the discrete entropy estimates obtained at the previous section. We will prove that the weak solution 
of the nonlinear Cauchy problem (\refeq{eq:s1_edp}-\refeq{eq:s1_ic}) given by Theorem \ref{th:theo_conv}, obtained as the limit \(\ulim\) of the numerical scheme,
 is indeed the unique entropic solution of the Cauchy problem (\refeq{eq:s1_edp}-\refeq{eq:s1_ic}). 

\begin{theorem}[convergence result]
\label{theo_sol_ent}
Let \(\incu\) be a weak solution of the Cauchy problem (\refeq{eq:s1_edp}-\refeq{eq:s1_ic}) given by Theorem~\ref{th:theo_conv}. Then we have that \(\incu\) is the unique entropic solution of (\refeq{eq:s1_edp}-\refeq{eq:s1_ic}).
\end{theorem}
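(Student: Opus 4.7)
The plan is to show that the limit $\ulim$ from Theorem~\ref{th:theo_conv} satisfies the Kruzhkov entropy inequality
\[
\drondt\ent(\ulim)+\drondx\entflux(\ulim)\leq 0
\]
in the distributional sense on $\R^+\times\R$, for every strictly convex entropy-entropy flux pair $(\ent,\entflux)$, with initial datum $\incu^0$. Kruzhkov's uniqueness theorem for scalar conservation laws with $\Linf\cap\BV$ data will then identify $\ulim$ with the unique entropy solution. The discrete counterpart is already available: Proposition~\ref{thm:prop_eq:diss_num_ent} asserts that the numerical entropy production $\entprodjn$ from \eqref{eq:diss_num_ent} is non-positive.

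Given a non-negative test function $\monphi\in\Coinf(\R^+\times\R)$, the next step is to multiply $\entprodjn\leq 0$ by $\dt\dx\phinj$ and sum over $n\geq 1$ and $j\in\Z$. Discrete integration by parts in time and space transfers the difference operators onto $\phidelta$, producing the finite-volume analogue of
\[
\int_0^{+\infty}\nspaceint\int_\R E^\delta\drondt\phidelta\,\ddd\varx\ddd\vart+\int_0^{+\infty}\nspaceint\int_\R Q^\delta\drondx\phidelta\,\ddd\varx\ddd\vart+\int_\R E^\delta(0,\varx)\phidelta(0,\varx)\,\ddd\varx\geq 0,
\]
where $E^\delta$ and $Q^\delta$ denote the piecewise constant reconstructions built from $(\Ejnpd)_{j,n}$ and $(\Qjnpd)_{j,n}$. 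Assumption~\ref{hyp:initeq} gives $\vjz=\phi(\ujz)$, so $\fpmjz=\hpm(\ujz)$; the first relaxation step preserves this equilibrium, whence $E_j^{1/2}=\ent(\ujz)$ and the initial boundary term converges to $\int_\R\ent(\incu^0)\monphi(0,\varx)\,\ddd\varx$.

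The core technical step is the identification of the limits of $E^\delta$ and $Q^\delta$. From the identity $\fpmjnpd=\hpm(\ujnpd)\pm\frac{1}{2\lambda}(\vjnpd-\phi(\ujnpd))$ and a first-order Taylor expansion of $\kinentpm$, which is $\Cun$ with bounded second derivative on the invariant intervals $[\hpm(\alpha),\hpm(\beta)]$ from Proposition~\ref{th:proplinfty}, one obtains
\[
\Ejnpd=\ent(\ujnpd)+O\bigl(|\vjnpd-\phi(\ujnpd)|\bigr),
\]
and a similar expression for $\Qjnpd$ with an additional $O(\lambda|\ujnpd[j+1]-\ujnpd|)$ error coming from the cell shift inherent in the definition $\Qjnpd=\lambda\kinentp(\fpjnpd)-\lambda\kinentm(\fmjnpd[j+1])$. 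Proposition~\ref{th:prop_eq_gap} controls $\normeun{\phi(\un)-\vn}$ by $(2\lambda\dx/s)\tv(\incu^0)$, and the relaxation relation \eqref{eq:sc_uv_relstep} transmits the bound to the post-relaxation moment; the shift contribution vanishes in $\Lun$ via the total variation estimate of Proposition~\ref{th:propVT}. Combined with the $\Lunloc$ convergence $(\udelta,\vdelta)\to(\ulim,\phi(\ulim))$ given by Theorem~\ref{th:theo_conv} and the continuity of $\ent$ and $\entflux$, one concludes $E^\delta\to\ent(\ulim)$ and $Q^\delta\to\entflux(\ulim)$ in $\Lunloc([0,+\infty)\times\R)$.

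Passing to the limit in the discrete inequality yields the desired entropy inequality against every non-negative test function, for every strictly convex entropy-entropy flux pair, and Kruzhkov's theorem then concludes. The delicate point, which I expect to be the main obstacle, is upgrading the almost-everywhere convergence furnished by Helly's theorem to an $\Lunloc$ convergence of the non-linear entropies $E^\delta$ and $Q^\delta$ uniformly over a neighbourhood of the support of $\monphi$; this requires combining the $\Linf$ bound, the spatial total variation estimate, and the quantitative equilibrium-gap estimate from Section~\ref{sec:TVLinf_est} in a single uniform control across all time steps.
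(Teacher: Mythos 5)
Your proposal is correct and follows essentially the same route as the paper: both multiply the non-positive entropy production of Proposition~\ref{thm:prop_eq:diss_num_ent} by a non-negative test function, perform a discrete integration by parts, and pass to the limit by identifying the numerical entropy pair with \((\ent(\ulim),\entflux(\ulim))\) through the convergence of the distribution functions to their equilibria \(\hpm(\ulim)\), which rests on the equilibrium-gap estimate of Proposition~\ref{th:prop_eq_gap} exactly as you describe. The only cosmetic difference is that the paper takes test functions supported in \(]0,+\infty[\times\R\) and thus never meets the initial boundary term you carry along, relying instead on the weak formulation of Theorem~\ref{th:theo_conv} to encode the initial datum.
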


The main tools of the proof are classical in the context of the finite volume schemes \cite{GodRav:1991:0}, 
but the choice of the numerical entropies \eqref{defnument} being specific to the \duqd scheme, we present it here.

\begin{proof}
Let \((\ent, \entflux)\) be an entropy-entropy-flux pair for \eqref{eq:s1_edp}, with \(\ent\) strictly convex and let \(\entprodjn\) be defined by \eqref{eq:diss_num_ent}. Let us also consider \(\psi\in\mathcal{D}(]0,+\infty[\times\R)\), \(\psi\geq0\) and put 
\begin{gather*}
\psinj = 
\psi(\tn, \xj),
\quad j\in\Z, \ n\in\N,
\\
\psidelta(\vart, \varx) =
\sum_{n\in\N} \sum_{j\in\Z}
\indicatrice_{[\tn,\tn[n+1])}(\vart)
\indicatrice_{[\xj,\xj[j+1])}(\varx) \psinj.
\end{gather*}
The result of Proposition \ref{thm:prop_eq:diss_num_ent} implies that
\begin{equation*}
\dt\dx\entprodjn\psinj \leq 0,
\qquad j\in\Z, \ n\in\N^*,
\end{equation*}
and by summing over \(n\in\N^*\) and over \(j\in\Z\), we get
\begin{equation*}
\dt\dx \sum_{j\in\Z} \sum_{n\in\N^*} \psinj \left(
\frac{\Ejnpd-\Ejnmd}{\dt} + \frac{\Qjnmd-\Qjnmd[j-\onehalf]}{\dx}
\right)\le0. 
\end{equation*}
We now do a discrete integration  by parts. We obtain
\begin{equation*}
\dt\dx \left(
\sum_{j\in\Z}\sum_{n\in\N}\Ejnpd\frac{\psinj-\psinpj}{\dt}
+
\sum_{j\in\Z} \sum_{n\in\N^*} \Qjnmd \frac{\psinj-\psinjp}{\dx}
\right)\le0.
\end{equation*}
By using the definition \eqref{defnument} of the numerical entropies \(\Ejnpd\) and \(\Qjnmd\), the above inequality writes as
\begin{multline*}
\sum_{j\in\Z}\sum_{n\in\N}
\int_{\xj}^{\xj[j+1]}\!\!\!
\int_{\tn}^{\tn[n+1]}\!\!\!
\Bigl(e^{+}\!(\fpjnpd)+e^{-}\!(\fmjnpd)\Bigr)
\frac{\psi(\tn,\xj)-\psi(\tn[n+1],\xj)}{\dt}\ddd\vart\ddd\varx\\
+
\sum_{j\in\Z}\sum_{n\in\N^*}
\int_{\xj}^{\xj[j+1]}\!\!\!
\int_{\tn}^{\tn[n+1]}\!\!\!\!\!\!
\lambda\Bigl(e^{+}\!(\fpjnpd)-e^{-}\!(\fmjnpd[j+1])\Bigr)
\frac{\psi(\tn,\xj)-\psi(\tn,\xj[j+1])}{\dx}\ddd\vart\ddd\varx\leq0.
\end{multline*}
We now use that \(\fpjnp=\fpjnpd[j-1]\), \(\fmjnp=\fmjnpd[j+1]\), for all \(j\in\Z\) and \(n\in\N\). We obtain
\begin{multline*}
\sum_{j\in\Z}\sum_{n\in\N}
\int_{\xj}^{\xj[j+1]}\!\!\!
\int_{\tn}^{\tn[n+1]}\!\!\!
\Bigl(e^{+}\!(\fpjnpd)+e^{-}\!(\fmjnpd)\Bigr)
\frac{\psi(\tn,\xj)-\psi(\tn[n+1],\xj)}{\dt}\ddd\vart\ddd\varx\\
+
\sum_{j\in\Z}\sum_{n\in\N^*}
\int_{\xj}^{\xj[j+1]}\!\!\!
\int_{\tn}^{\tn[n+1]}\!\!\!\!\!\!
\lambda\Bigl(e^{+}\!(\fpjnpd)-e^{-}\!(\fmjnpd[j+1])\Bigr)
\frac{\psi(\tn,\xj)-\psi(\tn,\xj[j+1])}{\dx}\ddd\vart\ddd\varx\\
=\int_{0}^{+\infty}\int_\R \big(e^+(\fpdelta(t+\dt,x+\dx))+e^-(\fmdelta(t+\dt,x-\dx))\big)\\
\times\Bigg(\frac{\psidelta(t,x)-\psidelta(t+\dt,x)}{\dt}\Bigg)\ddd\vart\ddd\varx\\
+\int_{\dt}^{+\infty}\int_\R \big(\lambda e^+(\fpdelta(t+\dt,x+\dx))-\lambda e^-(\fmdelta(t+\dt,x))\big)\\
\times\Bigg(\frac{\psidelta(t,x)-\psidelta(t,x+\dx)}{\dx}\Bigg)\ddd\vart\ddd\varx\le0. 
\end{multline*}
Following the definition of \((e^+,e^-)\) and since 
$(\fpdelta,\fmdelta)$ tends to $(h^+(u),h^-(u))$ when $\dt$ and $\dx$ vanish,
by passing to the limit as \(\dt,\dx\to0\), we obtain
\begin{equation*}
\int_{0}^{+\infty}\int_\R \left(\eta(u)\frac{\partial\psi}{\partial t}+
q(u)\frac{\partial\psi}{\partial x}\right)\ddd\varx\ddd\vart\geq 0,
\end{equation*}
and the result of the theorem follows. 
\end{proof}

\section{Numerical illustrations}\label{sec:num_ill}

In this section, numerical simulations are given in order to illustrate the theoretical results of the previous sections. Two models are investigated: the advection equation with a constant velocity and the Burgers equation, both considering regular then discontinuous initial conditions. In particular, the numerical rate of convergence and the entropy production are computed in these cases.

Two initial conditions are systematically chosen: first a regular function 
\begin{equation}\label{eq:numuoreg}\tag{$I_A$}
 \incu^0(\varx) = \left\lbrace
\begin{aligned}
& 0 & \text{ if }\quad & \phantom{x_L{-}\delta\leq \;}\varx \leq x_L{-}\delta,\\
 &\tfrac{1}{2} + \tfrac{(\varx-x_L)(3\delta^2-(\varx-x_L)^2)}{4\delta^3}
 &\text{ if }\quad & x_L{-}\delta \leq \varx \leq x_L{+}\delta,\\
 & 1& \text{ if }\quad &x_L{+}\delta \leq \varx \leq x_R{+}\delta,\\
 &\tfrac{1}{2} - \tfrac{(\varx-x_R)(3\delta^2-(\varx-x_R)^2)}{4\delta^3}
 &\text{ if }\quad & x_R{-}\delta \leq \varx \leq x_R{+}\delta,\\
& 0 & \text{ if }\quad &x_R{+}\delta \leq \varx,
\end{aligned}
 \right.
\end{equation}
and second a discontinuous function
\begin{equation}\label{eq:numuodis}\tag{$I_B$}
 \incu^0(\varx) = \left\lbrace
\begin{aligned}
& 0 & \text{ if }\quad & \phantom{x_L\leq \;}\varx \leq x_L,\\
 & 1& \text{ if }\quad &x_L \leq \varx \leq x_R,\\
& 0 & \text{ if }\quad &x_R \leq \varx.
\end{aligned}
 \right.
\end{equation}
Numerically, we took
\(x_L=1/4\), \(x_R=3/4\), \(\delta=1/10\).
Note that the second case \eqref{eq:numuodis} can be obtained from the first case \eqref{eq:numuoreg} in the limit \(\delta\) goes to 0.

Concerning the choice of the relaxation parameter, asumption~\ref{hyp:s} only imposes that $s$ has to be a constant value lying in $(0,1]$. We have therefore chosen several values $$s\in\lbrace 0.1, 0.2, 0.5, 0.7, 0.9, 1.0 \rbrace$$ to illustrate the asymptotic convergence of the solution.

\subsection{Constant velocity advection equation}

The first studied model corresponds to the advection equation at velocity \(\vitessetransport\)
\begin{equation}\label{eq:numsystr}
 \left\lbrace
\begin{aligned}
& \drondt \incu(\vart, \varx) + \vitessetransport \drondx \incu(\vart, \varx) = 0, &&\varx\in\R, \vart>0,\\
&\incu(0,\varx) = \incu^0(\varx), &&\varx\in\R,
\end{aligned}
 \right.
\end{equation}
where the velocity \(\vitessetransport\) is taken equal to \(0.75\). 
The exact solution is well known for both initial conditions \eqref{eq:numuoreg} and \eqref{eq:numuodis} and reads \(\incu(\vart, \varx) = \incu^0(\varx - \vitessetransport\vart)\).

\begin{figure}[ht]
\centering
\includegraphics[width=0.49\linewidth]{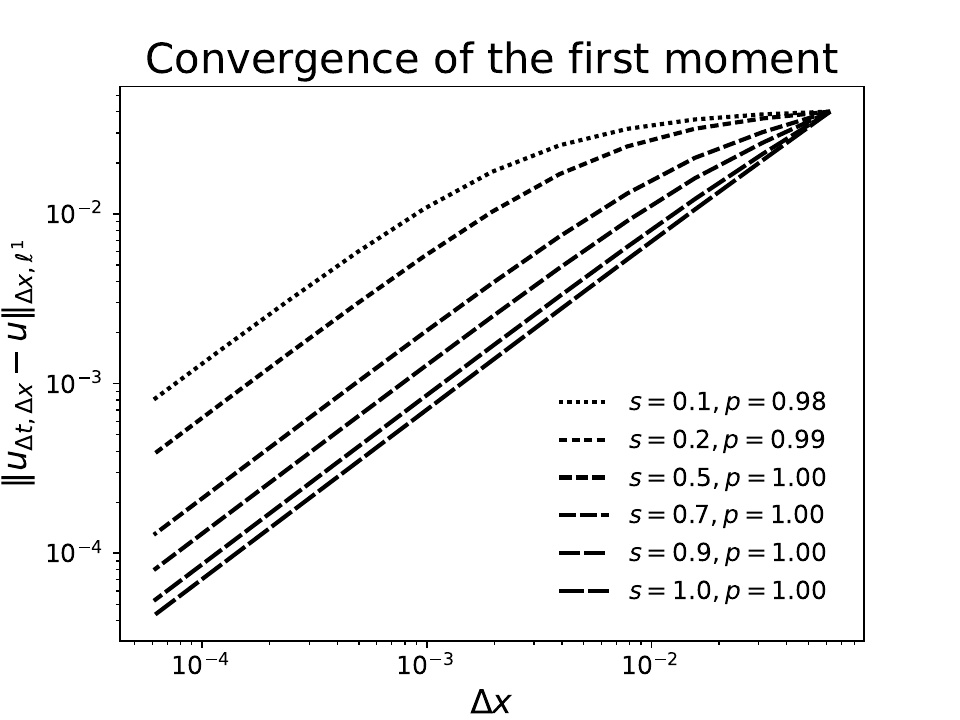}
\hfill
\includegraphics[width=0.49\linewidth]{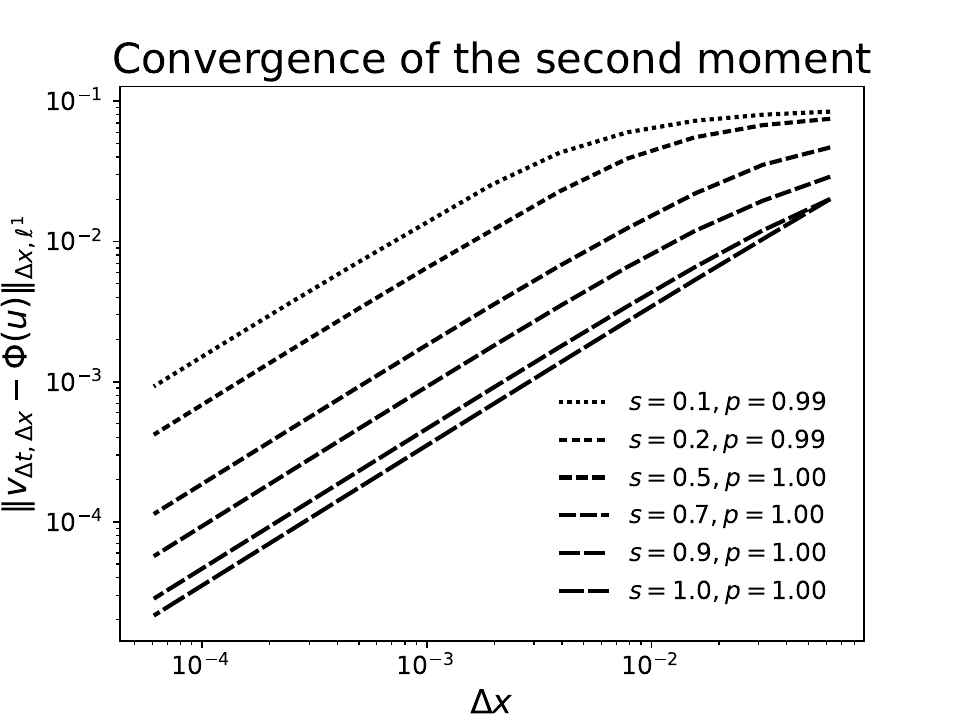}
\caption{Initial condition \eqref{eq:numuoreg}. Error in norm \(\ell^1\) of the numerical solution of the advection equation \eqref{eq:numsystr} according to the space steps \(\dx\) for several relaxation parameter values \(s\in\lbrace 0.1,0.2,0.5,0.7,0.9,1.0 \rbrace\). \(p\) is the convergence rate.}
\label{fig:numtrconvreg}
\end{figure}

\begin{figure}[ht]
\centering
\includegraphics[width=0.49\linewidth]{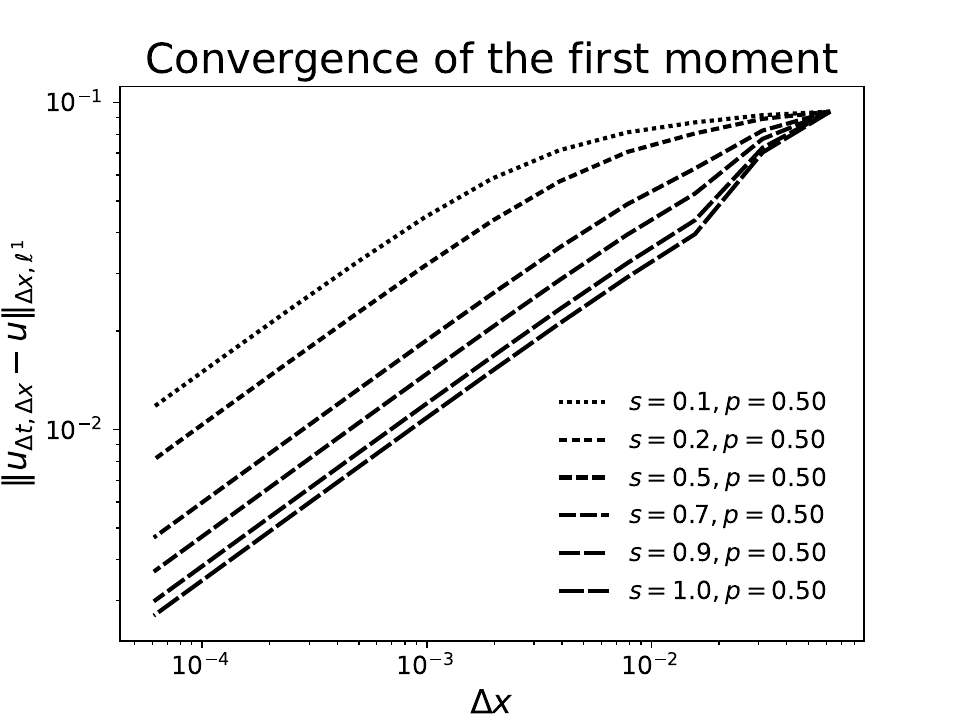}
\hfill
\includegraphics[width=0.49\linewidth]{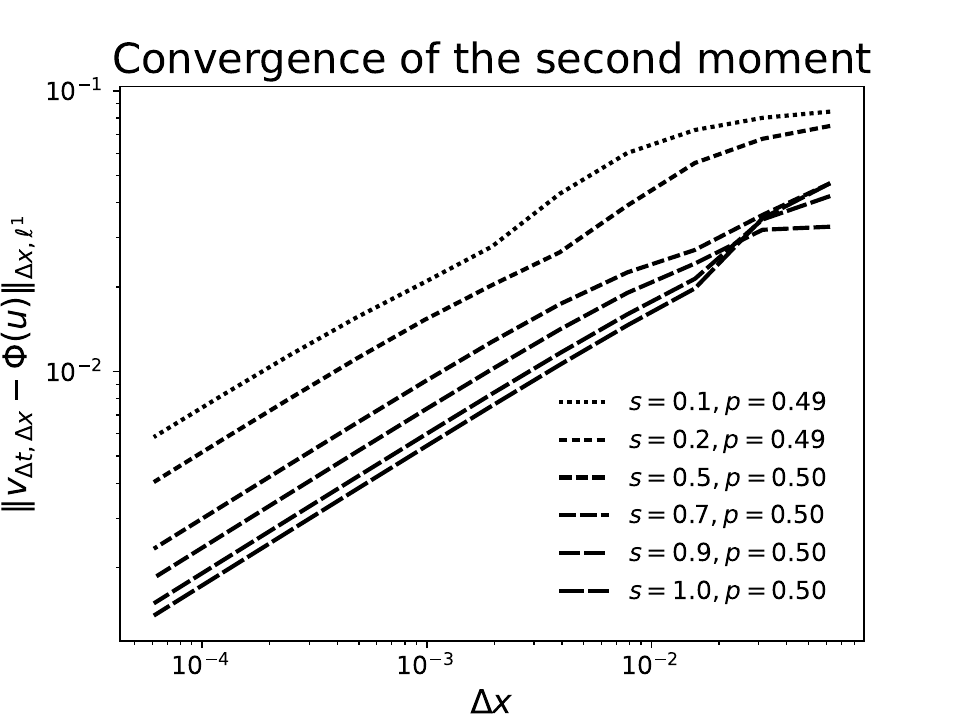}
\caption{Initial condition \eqref{eq:numuodis}. Error in norm \(\ell^1\) of the numerical solution of the advection equation \eqref{eq:numsystr} according to the space steps \(\dx\) for several relaxation parameter values \(s\in\lbrace 0.1,0.2,0.5,0.7,0.9,1.0 \rbrace\). \(p\) is the convergence rate.}
\label{fig:numtrconvdis}
\end{figure}

The convergence rates of the numerical solution can be read in Figure~\ref{fig:numtrconvreg} for the regular solution and in Figure~\ref{fig:numtrconvdis} for the discontinuous solution: the error on the first moment \(\incun\) and on the second moment \(\incvn\) converges towards 0 with a convergence rate equal to \(1\) for regular solutions and equal to \(0.5\) for discontinuous solutions. As expected, the error is even larger when the relaxation parameter \(s\) is small, this numerical results staying true for \(s\) lying in \([1,2]\) as observed in \cite{Gra201400} for the first moment in other test cases.

\begin{figure}[ht]
\includegraphics[width=0.49\linewidth]{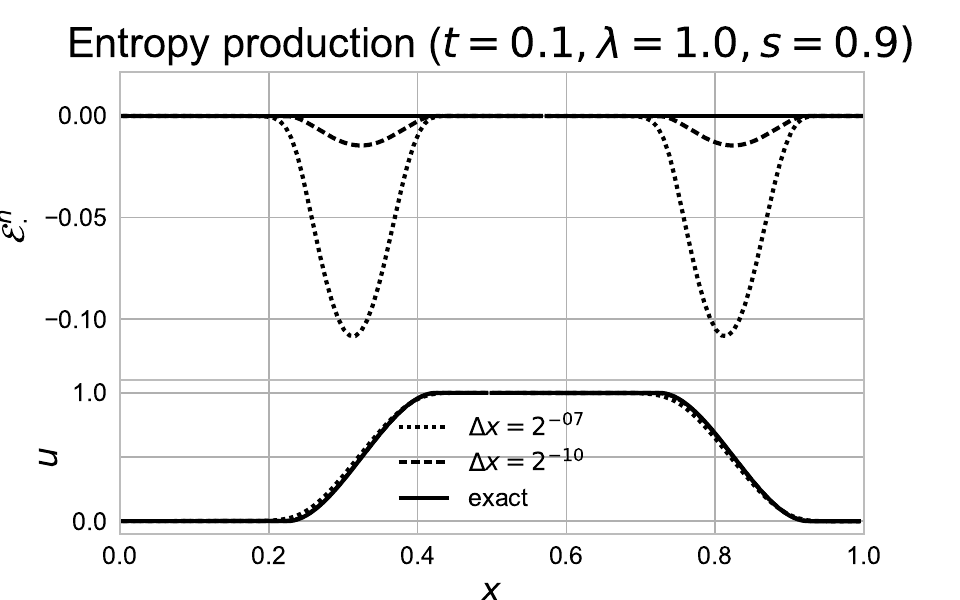}
\hfill
\includegraphics[width=0.49\linewidth]{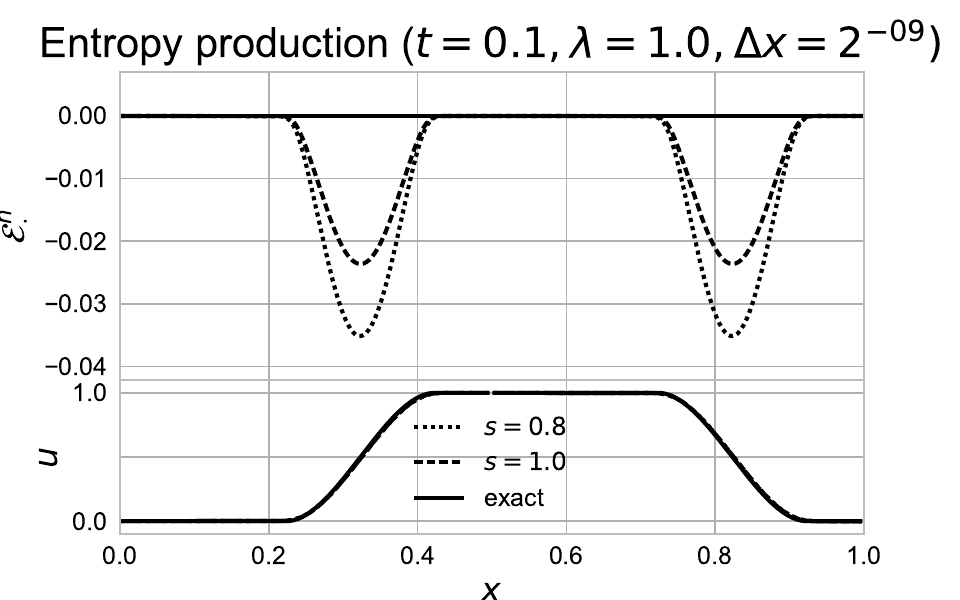}
\caption{Local in space numerical entropy production of the advection equation with initial condition \eqref{eq:numuoreg}for varying space steps \(\dx\) at left and for varying relation parameters \(s\) at right. The numerical and exact solutions at \(t=0.1\) are shown bellow and the corresponding entropy productions above.}
\label{fig:numtrregentprodloc}
\end{figure}

\begin{figure}[ht]
\includegraphics[width=0.49\linewidth]{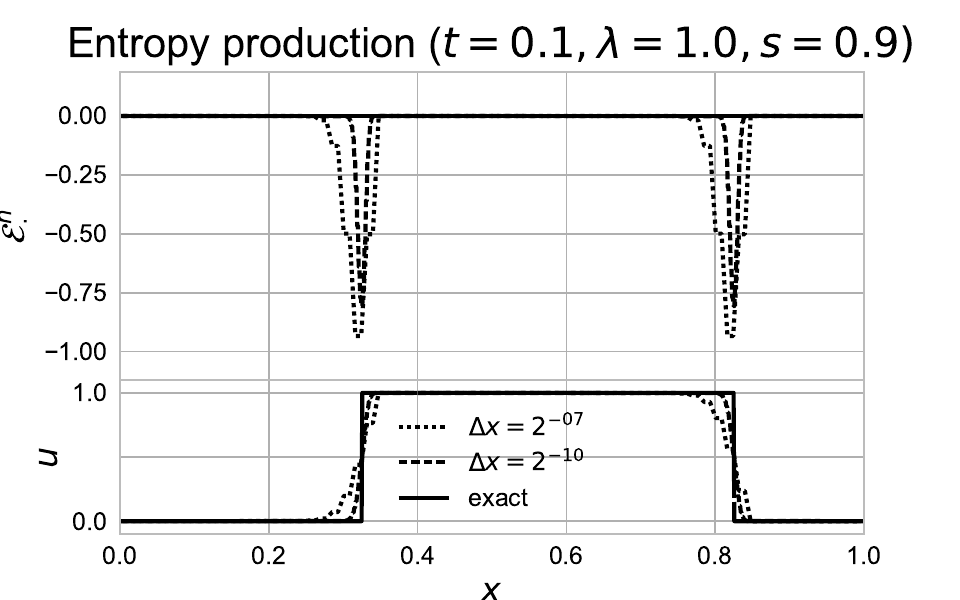}
\hfill
\includegraphics[width=0.49\linewidth]{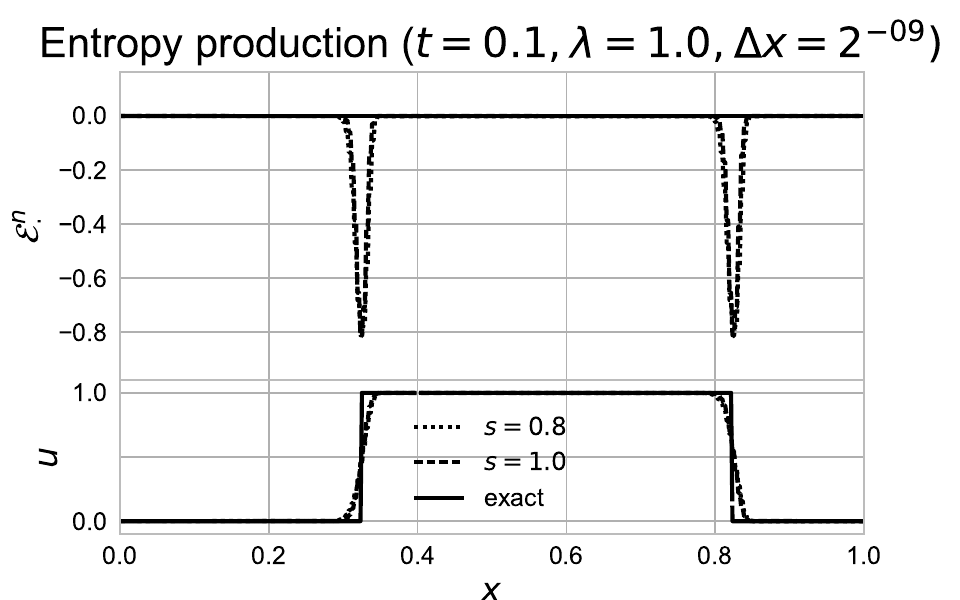}
\caption{Local in space numerical entropy production of the advection equation with initial condition \eqref{eq:numuodis}for varying space steps \(\dx\) at left and for varying relation parameters \(s\) at right. The numerical and exact solutions at \(t=0.1\) are shown bellow and the corresponding entropy productions above.}
\label{fig:numtrdisentprodloc}
\end{figure}

We now present the results on the numerical entropy production. As the system~\eqref{eq:numsystr} is linear, the exact entropy production is zero. 
In Figure~\ref{fig:numtrregentprodloc}, the local in space numerical entropy production \(\entprodjn\), \(j\in\Z\), is shown for the regular initial condition \eqref{eq:numuoreg}. This production is negative as proved in Proposition~\ref{thm:prop_eq:diss_num_ent}, located where the solution is not constant, and is smaller for small space step \(\dx\) and for large relaxation parameter \(s\).
In Figure~\ref{fig:numtrdisentprodloc}, the entropy production is shown for the discontinuous initial condition \eqref{eq:numuodis}. This production is also negative and located on the discontinuities. The \(\ell^1\)-norm is decreasing when the space step \(\dx\) is lower or when the relaxation parameter \(s\) is larger.

\subsection{Burgers equation}

The second studied model corresponds to the Burgers equation
\begin{equation}\label{eq:numsysB}
 \left\lbrace
\begin{aligned}
& \drondt \incu(\vart, \varx) + \drondx (\tfrac{1}{2}\incu^2(\vart, \varx)) = 0, &&\varx\in\R, \vart>0,\\
&\incu(0,\varx) = \incu^0(\varx), &&\varx\in\R.
\end{aligned}
 \right.
\end{equation}
The exact solution for both initial conditions can be computed: for the regular function \eqref{eq:numuoreg} with the theory of the characteristics until the shock appears and for the discontinuous function \eqref{eq:numuodis} with the combination of a shock and a rarefaction wave. 

\begin{figure}[ht]
\centering
\includegraphics[width=0.49\linewidth]{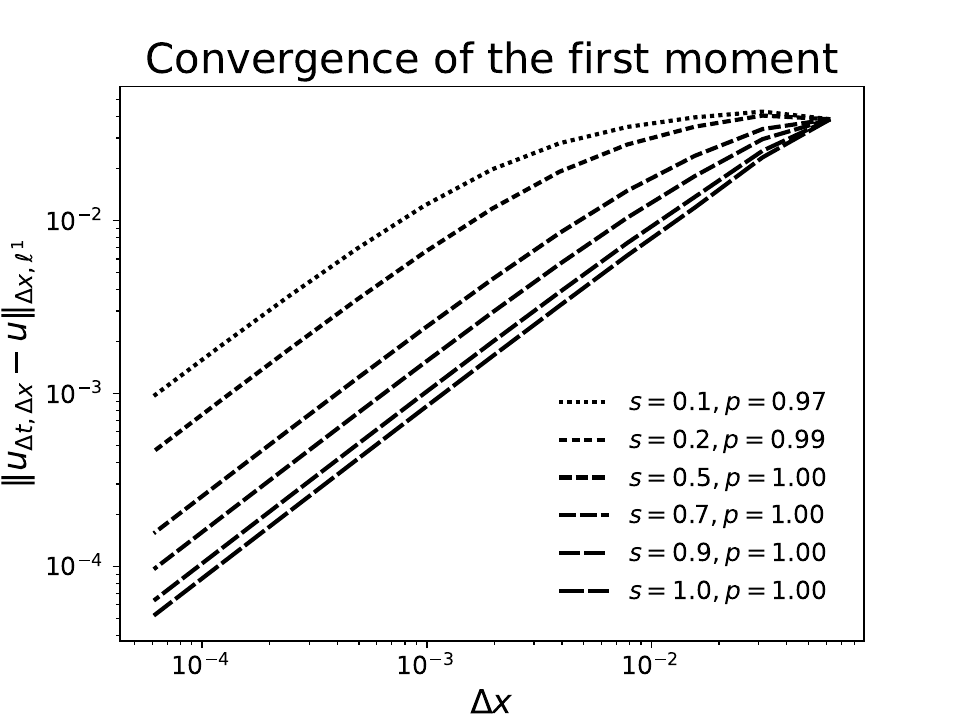}
\hfill
\includegraphics[width=0.49\linewidth]{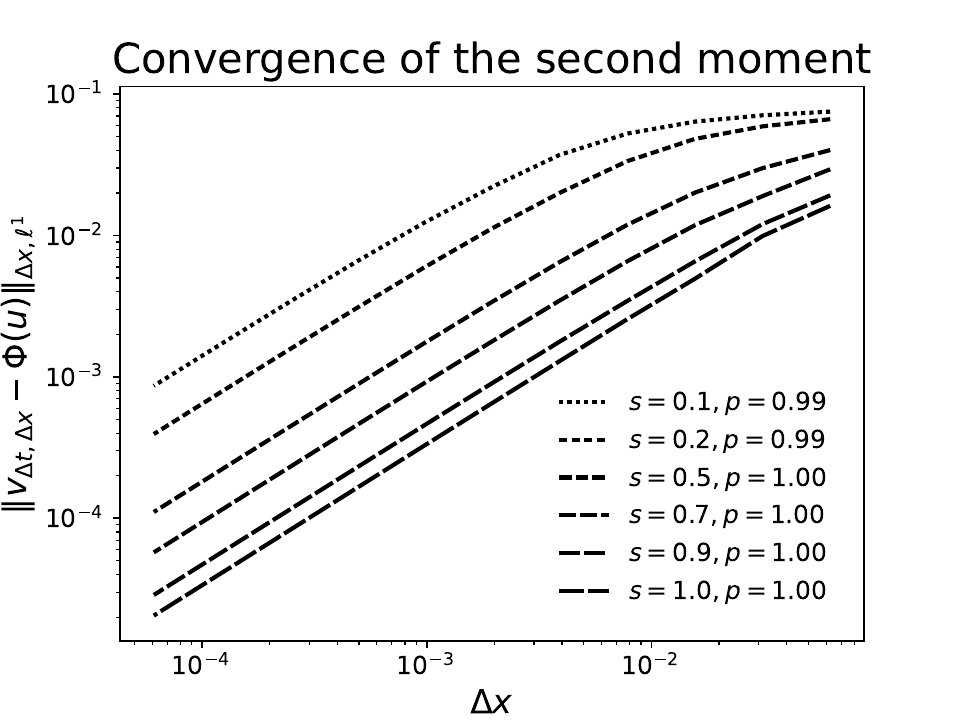}
\caption{Initial condition \eqref{eq:numuoreg}. Error in norm \(\ell^1\) of the numerical solution of the Burgers equation \eqref{eq:numsysB} according to the space steps \(\dx\) for several relaxation parameter values \(s\in\lbrace 0.1,0.2,0.5,0.7,0.9,1.0 \rbrace\). \(p\) is the convergence rate.}
\label{fig:numBuconvreg}
\end{figure}

\begin{figure}[ht]
\centering
\includegraphics[width=0.49\linewidth]{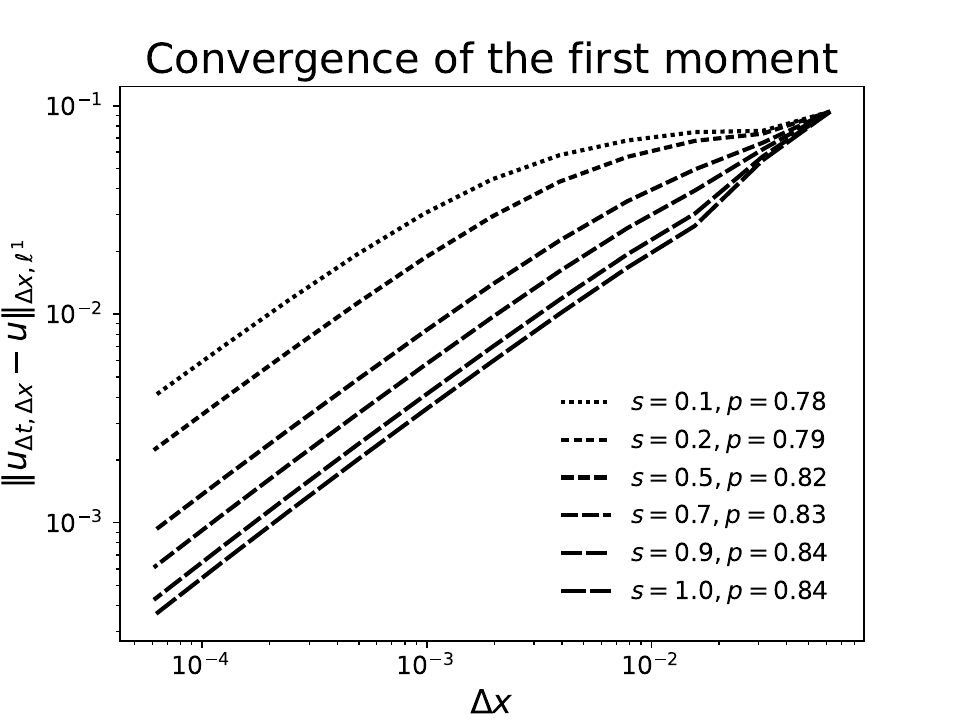}
\hfill
\includegraphics[width=0.49\linewidth]{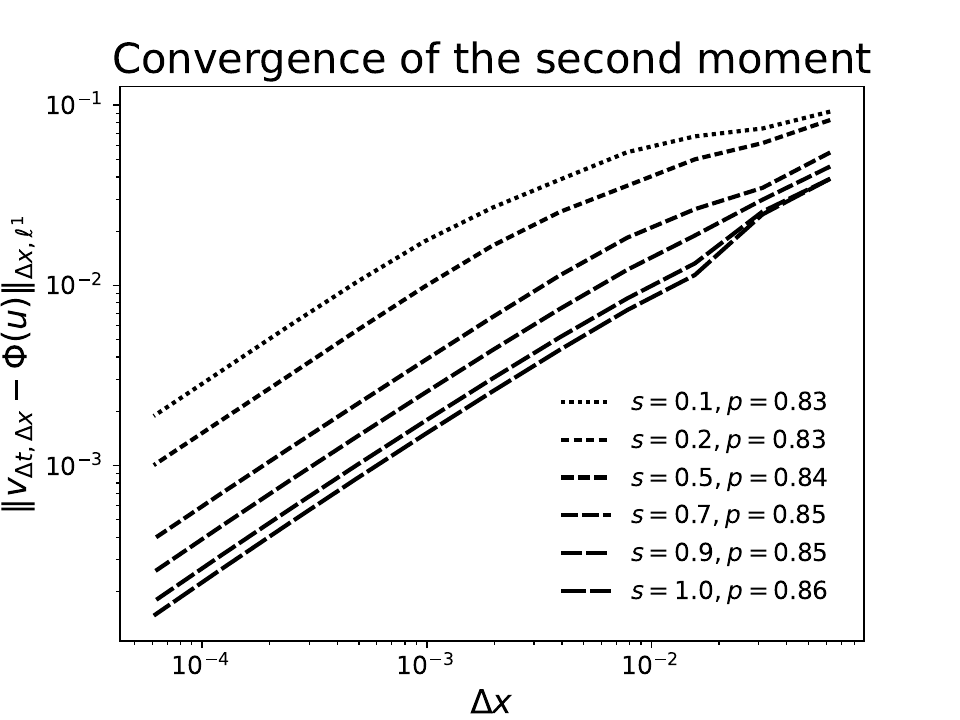}
\caption{Initial condition \eqref{eq:numuodis}. Error in norm \(\ell^1\) of the numerical solution of the Burgers equation \eqref{eq:numsysB} according to the space steps \(\dx\) for several relaxation parameter values \(s\in\lbrace 0.1,0.2,0.5,0.7,0.9,1.0 \rbrace\). \(p\) is the convergence rate.}
\label{fig:numBuconvdis}
\end{figure}

The convergence rates of the numerical solution can be read in Figure~\ref{fig:numBuconvreg} for the regular solution and in Figure~\ref{fig:numBuconvdis} for the discontinuous solution: the error on the first moment \(\incun\) and on the second moment \(\incvn\) converges towards 0.
For regular solutions, the convergence rate is equal to \(1\) and, for discontinuous solutions, it is about~\(0.8\). As expected, the error is even larger when the relaxation parameter \(s\) is small, this numerical results staying true for \(s\) lying in \([1,2]\) as observed in \cite{Gra201400} for the first moment in other test cases.
Note that we cannot explain the better convergence rate of this nonlinear equation for discontinuous solutions as previously shown in \cite{Gra201400}. 

\begin{figure}[ht]
\includegraphics[width=0.49\linewidth]{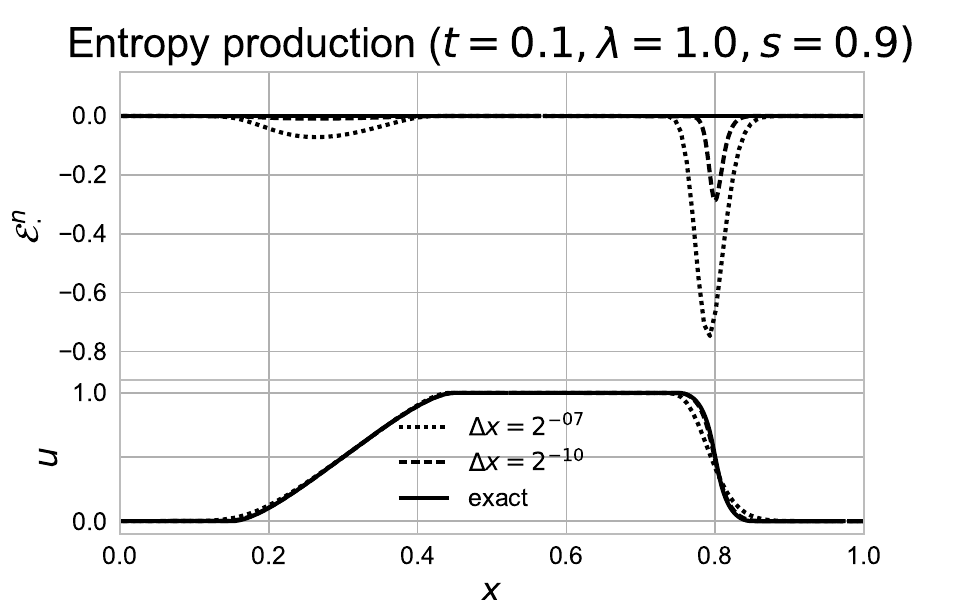}
\hfill
\includegraphics[width=0.49\linewidth]{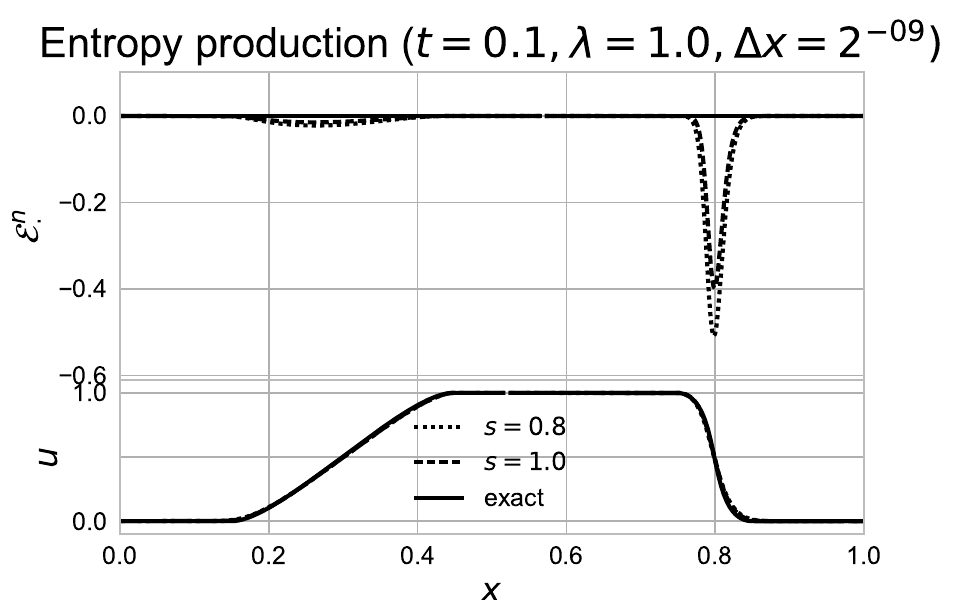}
\caption{Local in space numerical entropy production of the Burgers equation with initial condition \eqref{eq:numuoreg}for varying space steps \(\dx\) at left and for varying relation parameters \(s\) at right. The numerical and exact solutions at \(t=0.1\) are shown bellow and the corresponding entropy productions above.}
\label{fig:numBuregentprodloc}
\end{figure}

\begin{figure}[ht]
\includegraphics[width=0.49\linewidth]{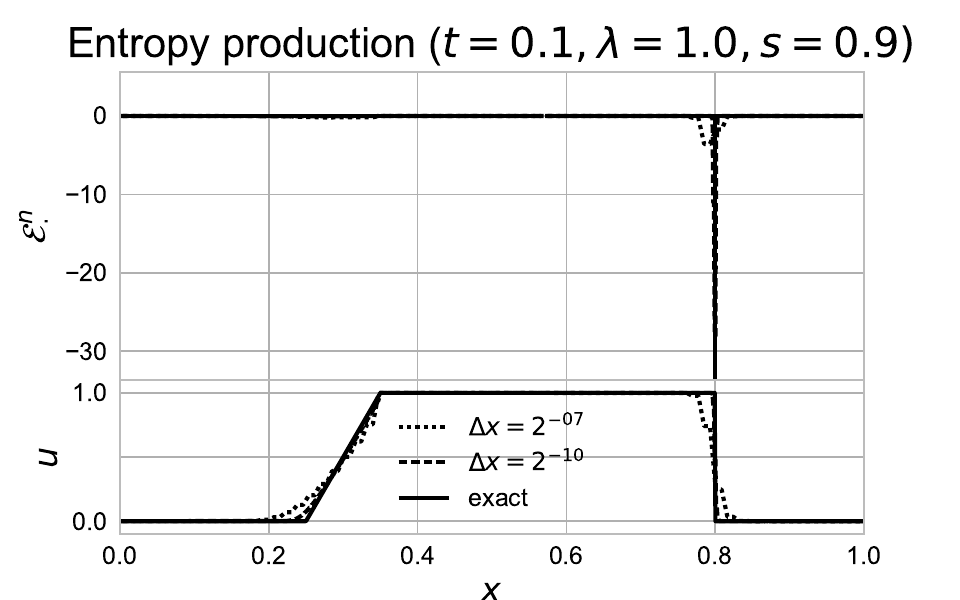}
\hfill
\includegraphics[width=0.49\linewidth]{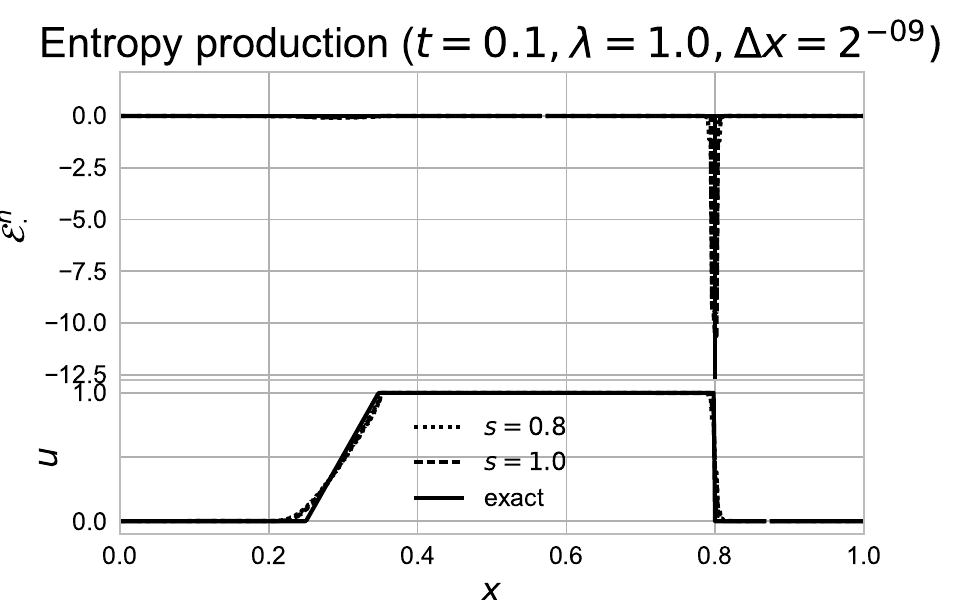}
\caption{Local in space numerical entropy production of the Burgers equation with initial condition \eqref{eq:numuodis}for varying space steps \(\dx\) at left and for varying relation parameters \(s\) at right. The numerical and exact solutions at \(t=0.1\) are shown bellow and the corresponding entropy productions above.}
\label{fig:numBudisentprodloc}
\end{figure}

We now present the results on the numerical entropy production. For the system~\eqref{eq:numsysB}, the exact entropy production is zero but on the decreasing discontinuities where it is a Dirac measure multiplied by a negative value. 
In Figure~\ref{fig:numBuregentprodloc}, the local in space numerical entropy production \(\entprodjn\), \(j\in\Z\), is shown for the regular initial condition \eqref{eq:numuoreg}. This production is negative as proved in Proposition~\ref{thm:prop_eq:diss_num_ent}, located where the solution is not constant, and is smaller for small space step \(\dx\) and for large relaxation parameter \(s\).
In Figure~\ref{fig:numBudisentprodloc}, the entropy production is shown for the discontinuous initial condition \eqref{eq:numuodis}. This production is also negative and essentially located on the discontinuity (a smaller contribution is located on the rarefaction wave, this contribution being decreasing when the space step \(\dx\) is smaller or when the relaxation parameter \(s\) is larger).

\subsection{Concluding remarks}

These numerical illustrations confirm the theoretical results of the convergence of the numerical solution for the \(\ell^1\)-norm for both moments \(\incu\) and \(\incv\). Moreover, the discrete entropy production is non-positive and seems to converge towards the exact entropy production.

Concerning the numerical convergence rate, we observe that it is equal to 1 for a smooth initial condition, as long as the solution remains regular enough. By putting the two equations \eqref{eq:s1_edp} and \eqref{eq:eqeq} (the target equation and the equivalent equation) side by side, this value is compatible with a truncation error of order 1 in time. 
In the case of a solution with regularity breaks, we notice a lower convergence rate.
One of the authors already observed these results without rigorous proof in \cite{Gra201400}.

\section{Conclusion}

In this paper, we prove, by using techniques based on finite volume methods---in particular on relaxation methods---, a convergence result for a non-linear hyperbolic one-dimensional scalar conservation law. More precisely, we prove the convergence of the numerical solutions of the \duqd scheme towards the unique entropy solution of the scalar conservation law. We prove in addition a numerical entropy estimation. Our results are based on the convexity properties of the scheme.

Future works could be dedicated to extend the present results to a wide range of relaxation parameters. Moreover, other popular lattice Boltzmann schemes, like a scheme with three velocities in one space dimension, should also be studied. Of course, general results concerning the convergence of more complex lattice Boltzmann schemes remains an open question. From a numerical point of view, we noticed different convergence rates according to whether the solution of the conservation law is discontinuous or not. The convergence rate of the scheme is also a subject to be investigated.

\section*{Acknowledgments}
The authors are grateful to Fr\'ed\'eric Lagouti\`ere and to Jean-Fran\c cois Babadjian for their explanations concerning bounded variation functions. The simulations have been obtained with the \texttt{python} package \text{pylbm} (\url{https://pylbm.readthedocs.io/en/latest/}).

\begin{appendix}

  \section{Proof of lemma \ref{lem_VTu0}}
  \label{ap_lem_VTu0}
  
  Using the definition of \(\ujz\) given in assumption~\ref{hyp:initeq}, we have that 
  \begin{multline*}
  \tv(\un[0]) = \frac{1}{\dx} \sum_{j\in\Z}
    \Bigg\vert  
      \int_{\xj}^{\xj[j+1]} \big(
        \incu^0(\varx+\dx) - \incu^0(\varx)
      \big) \ddd\varx
    \Bigg\vert  \\
  \leq
  \frac{1}{\dx} \sum_{j\in\Z} \int_{\xj}^{\xj[j+1]}
    \big\vert  \incu^0(\varx+\dx) - \incu^0(\varx) \big\vert 
    \ddd\varx
  =
  \frac{1}{\dx} \int_\R \big\vert 
    \incu^0(\varx+\dx) - \incu^0(\varx)
  \big\vert  \ddd\varx.
  \end{multline*}
  We will then prove that
  \begin{equation}\label{theovtu0}
  \frac{1}{\dx} \int_\R \big\vert 
    \incu^0(\varx+\dx) - \incu^0(\varx)
  \big\vert  \ddd\varx
  \leq \tv_\R(\incu^0).
  \end{equation}
  Suppose first that \(\incu^0\in \mathcal{C}^1(\R)\). In this case we have that 
  \begin{equation*}
  \tv_\R(\incu^0) = \int_\R \bigl\vert 
    (\incu^0)^{\prime}(\varx) 
  \bigr\vert  \ddd\varx
  \end{equation*}
  and we get
  \begin{multline*}
  \frac{1}{\dx} \int_\R \bigl\vert 
    \incu^0(\varx+\dx) - \incu^0(\varx)
  \bigr\vert \ddd\varx =
  \frac{1}{\dx} \int_\R \Bigl\vert  \int_{\varx}^{\varx+\dx}
  (\incu^0)^{\prime}(\vary) \ddd\vary \Bigr \vert \ddd\varx \\
  \leq
  \frac{1}{\dx}\int_\R\int_\R\bigl\vert 
  (\incu^0)^{\prime}(\vary)
  \bigr\vert  \indicatrice_{[\varx,\varx+\dx]}(\vary)\ddd\varx \ddd\vary
  =
  \frac{1}{\dx}\int_\R \bigl\vert 
    (\incu^0)^{\prime}(\vary)
  \bigr\vert  \dx \ddd\vary = \tv_\R(\incu^0).
  \end{multline*}
  We now prove \eqref{theovtu0} for general \(\incu^0\in BV(\R)\). To do so, we use the following result (see \cite{AmbFusPal:2000}): for \(f\in \BV(\R)\), there exists a sequence \((f_n)_n\in C^\infty(\R)\) such that \(f_n\to f\) in \(\Lunloc(\R)\) and \(\tv_\R(f_n)\to\tv_\R(f)\).
  
  We consider then such a sequence \((f_n)_n\) that converges towards \(\incu^0\). We have, for all \(M>0\),
  \begin{equation}\label{theovtu0:2}
  \frac{1}{\dx}\int_{-M}^M \bigl\vert  
    f_n(\varx+\dx) - f_n(\varx)
  \bigr\vert  \ddd\varx
  \leq
  \frac{1}{\dx} \int_\R \bigl\vert 
    f_n(\varx+\dx) - f_n(\varx)
  \bigr\vert  \ddd\varx
  \leq \tv_\R(f_n),
  \end{equation}
  since \(f_n\in C^\infty(\R)\).
  We have now, on the one hand, that
  \begin{equation*}
  \frac{1}{\dx} \int_{-M}^M \bigl\vert 
    f_n(\varx+\dx) - f_n(\varx)
  \bigr\vert  \ddd\varx \;\xrightarrow[n\to\infty]{} \frac{1}{\dx}\int_{-M}^M\big\vert \incu^0(\varx+\dx)-\incu^0(\varx)\big\vert \ddd\varx,
  \end{equation*}
  since \(f_n\to \incu^0\) in \(\Lunloc(\R)\). On the other hand, \(\tv_\R(f_n)\to\tv_\R(\incu^0)\), so that, by passing \eqref{theovtu0:2} to the limit as \(n\to\infty\), we obtain
  \begin{equation*}
  \frac{1}{\dx} \int_{-M}^M \bigl\vert 
    \incu^0(\varx+\dx) - \incu^0(\varx)
  \bigr\vert  \ddd\varx \leq \tv_\R(\incu^0).
  \end{equation*}
  By letting now \(M\to\infty\), we obtain
  \begin{equation*}
  \frac{1}{\dx} \int_\R \bigl\vert 
    \incu^0(\varx+\dx) - \incu^0(\varx)
  \bigr\vert  \ddd\varx\leq \tv_\R(\incu^0),
  \end{equation*}
  that ends the proof.
  
  \section{Proof of theorem \ref{th:theo_conv}.}\label{ap_proof_conv}
  
  We have to prove that the families of functions \(\udelta\) and \(\vdelta\) are bounded in \(\Linf(\Omega)\cap\BV(\Omega)\) for all bounded open subset \(\Omega\) of \(\R^+{\times}\R\).    
  Proposition \ref{th:proplinfty} implies that both \(\udelta\) and \(\vdelta\) are uniformly bounded in  \(\Linf(\R^+{\times}\R)\).

  Let \(\Omega\) be a bounded open subset of \(\R^+{\times}\R\), such that \(\Omega\subseteq[0,T]\times[-M,M]\), for some \(T>0\) and \(M>0\), and let \(N,J\in\N\) such that 
  \((N{-}1)\dt\leq T < N\dt\), 
  \((J{-}1)\dx\leq M < J\dx\). We have then that 
  \begin{equation*}
  {\tv}_{\Omega}(\udelta)
  \leq 
  \dt \sum_{n=0}^N \sum_{j=-J}^J
    \bigl\vert \ujn[j+1]-\ujn[j]\bigr\vert 
  + \dx \sum_{n=0}^N \sum_{j=-J}^J
    \bigl\vert \ujnp[j]-\ujn[j]\bigr\vert .
  \end{equation*}
  From Proposition \ref{th:propVT} we obtain that
  \begin{equation*}
  \dt \sum_{n=0}^N \sum_{j=-J}^J
    \bigl\vert \ujn[j+1]-\ujn[j] \bigr\vert 
  \leq N\dt\tv_\R(\incu^0)
  \leq (T+\dt)\tv_\R(\incu^0),
  \end{equation*}
  which is bounded as \(\dt\to0\).
  
  On the other hand, as a consequence of Proposition \ref{th:propVTt}, we have 
  \begin{equation*}
    \dx \sum_{n=0}^N \sum_{j=-J}^J
      \bigl\vert \ujnp[j]-\ujn[j] \bigr\vert 
    \leq 
    \frac{\dx}{\dt}(N+1)\dt 
      2\tv_\R(\incu^0)
    \leq 2\lambda(T+\dt)\tv_\R(\incu^0),
  \end{equation*}
  which is also bounded as \(\dt\to0\).
  We obtain for \(\vdelta\) a similar result.
  These estimations imply that the set \(\lbrace(\udelta,\vdelta)\rbrace_{\dt,\dx}\) remains bounded in \(\Linf(\Omega)\cap \BV(\Omega)\), and this is valid for any bounded set \(\Omega\subseteq\R^+\times \R\). The compactness of \(\BV(\Omega)\cap \Linf(\Omega)\) in \(\Lun(\Omega)\) (Helly's theorem) implies that there exists a sub-sequence of \((\udelta,\vdelta)\) and functions \((\ulim,\vlim)\) satisfying the conditions of the Theorem. 
  
  Proposition \ref{th:prop_eq_gap} now implies that
  \begin{equation*}
  \bigl\Vert 
    \phi(\udelta)-\vdelta
  \bigr\Vert _{\Linf([0,T];\Lunloc(\R))}
  \xrightarrow[\dt,\dx\to0]{}0.
  \end{equation*}
  Since  \((\udelta,\vdelta)\longrightarrow(\ulim,\vlim)\),
  we get \(\phi(\ulim)=\vlim\), a.e. \(x\in\R\), for all \(t>0\).
  
  In order to prove that \(\ulim\) is a weak solution of (\refeq{eq:s1_edp}-\refeq{eq:s1_ic}), let us consider \(\monphi\in \Coinf(\R^+\times\R)\) and put 
  \begin{equation*}
  \phinj = \monphi(\tn,\xj),\qquad
  \phidelta(\vart,\varx) = \sum_{n\in\N}\sum_{j\in\Z}
  \phinj\indicatrice_{[\tn,\tn[n+1])}(\vart)
  \indicatrice_{[\xj,\xj[j+1])}(\varx).
  \end{equation*}
  We multiply both sides of \eqref{eq:sc_one_step_con} by 
  \(\dt\dx\phinj\), we sum over \(n\in\N\) and \(j\in\Z\), do a discrete integration by parts, and pass to the limit as \(\dt,\dx\to0\). 
  The first term on the left-hand side reads
  \begin{multline*}
  \dt\dx \sum_{n\geq 1} \sum_{j\in\Z}
    \ujn[j] \frac{\phinmj-\phinj}{\dt}
  - \dt\dx \sum_{j\in\Z} \frac{\ujz\phizj}{\dt}\\
  = \int_{\dt}^{+\infty} \nspaceint \int_\R
    \udelta(\vart,\varx)
    \frac{\phidelta(\vart{-}\dt,\varx) - \phidelta(\vart,\varx)}{\dt}
    \ddd\varx\ddd\vart \\
  -\int_\R \udelta(0,\varx)\phidelta(0,\varx)\ddd\varx \\
  \xrightarrow[\dt,\dx\to0]{}
  -\int_0^{+\infty}\nspaceint\int_\R \ulim(\vart,\varx)
  \drondt\monphi(\vart,\varx)\ddd\varx\ddd\vart
  - \int_\R \incu^0(\varx) \monphi(0,\varx)\ddd\varx.
  \end{multline*}
  By a similar reasoning we get for the second term on the left-hand side 
  \begin{multline*}
  \dt\dx \sum_{n\geq 0}\sum_{j\in\Z}
  \vjn[j]\frac{\phinjm-\phinjp}{2\dx}\\
  =\int_{0}^{+\infty}\nspaceint\int_\R \vdelta(\vart,\varx) \frac{\phidelta(\vart,\varx{-}\dx)-\phidelta(\vart,\varx{+}\dx)}{2\dx}\ddd\varx\ddd\vart\\
  \xrightarrow[\dt,\dx\to0]{}
  -\int_0^{+\infty}\nspaceint\int_\R \vlim(\vart,\varx)\drondx\monphi(\vart,\varx)\ddd\varx\ddd\vart
  =-\int_0^{+\infty}\nspaceint\int_\R \phi(\ulim(\vart,\varx))\drondx\monphi(\vart,\varx)\ddd\varx\ddd\vart.
  \end{multline*}
  Concerning the third term, it can be written as
  \begin{multline*}
  \int_{0}^{+\infty}\nspaceint\int_\R 
    \udelta(\vart,\varx) \Bigl(
      \frac{\phidelta(\vart,\varx{-}\dx) - \phidelta(\vart,\varx)}{2\dx} \\
      - \frac{\phidelta(\vart,\varx)-\phidelta(\vart,\varx{+}\dx)}{2\dx}
    \Bigr) \ddd\varx\ddd\vart,
  \end{multline*}
  which vanishes as \(\dt,\dx\to0\).
  Let us now treat the right-hand side of \eqref{eq:sc_one_step_con}. After integration by parts, we get
  \begin{multline*}
  s\dt\dx
  \sum_{n\in\N}\sum_{j\in\Z} \bigl(\phi(\ujn)-\vjn\bigr) \frac{\phinjp-\phinjm}{2\dx}\\
  =s\int_{0}^{+\infty}\nspaceint\int_\R
  \bigl(\phi(\udelta)-\vdelta\bigr)\frac{\phidelta(\vart,\varx{+}\dx)-\phidelta(\vart,\varx{-}\dx)}{2\dx}\ddd\varx\ddd\vart.
  \end{multline*}
  Let \(T>0\), \(M>0\), such that \(\operatorname{supp}(\monphi)\subseteq[0,T]\times[-M,M]\). Since 
  \begin{multline*}
  \left\vert 
  \int_{0}^{+\infty}\nspaceint\int_\R
  \bigl(\phi(\udelta)-\vdelta\bigr)\frac{\phidelta(\vart,\varx{+}\dx)-\phidelta(\vart,\varx{-}\dx)}{2\dx}\ddd\varx\ddd\vart
  \right\vert \\
  \leq
  \iint_{\operatorname{supp}(\monphi)}\left\vert \phi(\udelta)-\vdelta\right\vert 
  \norme{\drondx\monphi}_{\infty}\ddd\varx\ddd\vart \\
  \leq
  \norme{\drondx\monphi}_{\infty} \norme{\phi(\udelta)-\vdelta}_{\Linf([0,T];\Lunloc(\R)},
  \end{multline*}
  we conclude, as a consequence of Proposition \ref{th:prop_eq_gap}, that this last term tends towards 0 when \(\dt\) and \(\dx\) go to 0.
  We conclude then that \(\ulim\) is a weak solution of \eqref{eq:s1_edp}.
  
\end{appendix}


\bibliographystyle{amsplain}
\bibliography{Caetano_Dubois_Graille}   

\end{document}